 \newtheorem{theorem}{Theorem}[section]
\newtheorem{corollary}[theorem]{Corollary}
\newtheorem{proposition}[theorem]{Proposition}
\theoremstyle{definition}
\newtheorem{definition}[theorem]{Definition}
\theoremstyle{definition}
\newtheorem{example}[theorem]{Example}
\newtheorem{remark}[theorem]{Remark}
\numberwithin{equation}{section}
 \numberwithin{equation}{subsection}
\begin{document}

\title{Generalized notions of module character amenability}

\author[A. Bodaghi]{A. Bodaghi}
\address{Department of Mathematics, Garmsar Branch, Islamic Azad University,  Garmsar,
 Iran}
\email{abasalt.bodaghi@gmail.com}
\author[H. Ebrahimi]{H. Ebrahimi}
\address{Department of Mathematics, University of Isfahan, P.O.BOX 81764-73441, Isfahan, Iran}
\email{ebrahimi89phd@sci.ui.ac.ir}
\author[M. L. Bami]{M. Lashkarizadeh Bami}
\address{Department of Mathematics, University of Isfahan, P.O.BOX 81764-73441, Isfahan, Iran}
\email{lashkari@sci.ui.ac.ir, lashkari@math.ui.ac.ir}

\subjclass[2010]{22D15, 43A20, 43A40, 46H25}

\keywords{Banach module; Inverse semigroup; Module approximate character
amenability; Module character amenability; Module character contractibility.} 

\dedicatory{}
\smallskip

\begin{abstract}
In this paper, we study the hereditary properties
of module $(\phi,\varphi)$-amenability on Banach algebras. We also define the concept of module character contractibility for Banach algebras and obtain characterizations of module character contractible Banach algebras in terms of the existence of module $(\phi,\varphi)$-diagonals.  We introduce module approximately character amenable Banach algebras. Finally,  for every inverse semigroup $S$ with subsemigroup $E$ of idempotents, we find necessary and sufficient conditions for the  $\ell^1(S)$ and its second dual to be module approximate character amenable (as a $\ell^1(E)$-module).
\end{abstract}

\maketitle

\section{Introduction}

M. Amini \cite{am1} introduced the notion of module amenability for a class of Banach algebras which could be considered as a generalization of the Johnson's
amenability \cite{joh}.  He showed that for an inverse semigroup $S$ with the set of idempotents $E$, the semigroup algebra $\ell^1(S)$ is module amenable, as a Banach module over $\ell^1(E)$, if and only if $S$ is amenable. This notion is modified in \cite{boda}, using module homomorphisms
between Banach algebras. Motivated by $\phi$-amenability and character amenability which are studied in \cite{kan1}, \cite{kan2} and \cite{mon}, Bodaghi and Amini \cite{bod} introduced the concept of module $(\phi,\varphi)$-amenability for Banach algebras  and investigated a module character amenable
Banach algebra. They showed that such Banach algebras posses module character virtual (approximate) diagonals. Also, in \cite{bod}, the authors have characterized the module $(\phi,\varphi)$-amenability of a Banach algebra $\mathcal A$ through vanishing of the first Hochschild module cohomology group $\mathcal H^1_{\mathfrak A}(\mathcal A, X^*)$ for certain Banach $\mathcal A$-bimodules $X$. In \cite{pa}, Aghababa and Bodaghi introduced the concepts of module (uniform) approximate amenability and contractibility for Banach algebras (for the classical cases of such notions, see \cite{gh1} and \cite{gh2}). They proved that $\ell^1(S)$ is module approximately amenable (contractible) if and only if it is module uniformly approximately amenable if and only if $S$ is amenable. Also, they showed that the module (uniform) approximate amenability of $\ell^1(S)^{**}$ is equivalent to the finitness of a maximal group homomorphic image of $S$. Furthermore, in \cite{pa}, the authors provided some examples of Banach algebras that are module approximately amenable but not  approximately amenable (for more information about other notions of module amenability, see \cite{ame}, \cite{bodaghi},  \cite{bo3}, \cite{bab}, \cite{lva} and \cite{pou}).

In this paper,  we continue the investigation of the module character amenability which is begun in \cite{bod} and study the hereditary properties
of module $(\phi,\varphi)$-amenability  on Banach algebras. We also give a characterizations of module
$(\phi,\varphi)$-amenability in terms of Hahn-Banach type extension property (module version). As an example, we indicate a module character amenable Banach algebra which is not character amenable. In section 4, we define module character contractible Banach algebras. The main result of this
section asserts that module character contractibility is equivalent to
the existence of a module diagonal and having a left identity. In the last section, we define the concept of module approximate $(\phi,\varphi)$-amenability on Banach algebras and characterize the structure of such Banach algebras. We show that for
an inverse semigroup $S$ with a set of idempotents $E$,
$\ell^1(S)$ and its second dual are module approximate character amenable (with respect to
the special actions).

\section{Preliminaries and Notations}

Let ${\mathcal A}$ and ${\mathfrak A}$ be Banach algebras such
that ${\mathcal A}$ is a Banach ${\mathfrak A}$-bimodule with
compatible actions, that is

\begin{align}\label{e1} \alpha\cdot(ab)=(\alpha\cdot a)b,
\,\,(ab)\cdot\alpha=a(b\cdot\alpha) \hspace{0.3cm}(a,b \in
{\mathcal A},\alpha\in {\mathfrak A}). 
\end{align}
Let ${X}$ be a Banach ${\mathcal A}$-bimodule and a
 Banach ${\mathfrak A}$-bimodule with compatible actions, that is

$\hspace{1.5cm} \alpha\cdot(a\cdot x)=(\alpha\cdot a)\cdot x,
\,\,a\cdot(\alpha\cdot x)=(a\cdot\alpha)\cdot x, \,\,(\alpha\cdot
x)\cdot a=\alpha\cdot(x\cdot a)$\\for all $a \in{\mathcal
A},\alpha\in {\mathfrak A},x\in{X}$ and similarly for
the right and two-sided actions. Then we say that ${X}$ is a
Banach ${\mathcal A}$-${\mathfrak A}$-module. If moreover
$\alpha\cdot x=x\cdot\alpha $ for all $\alpha\in {\mathfrak A}$
and $x\in X$, then $ X $ is called a {\it commutative}
${\mathcal A}$-${\mathfrak A}$-module. Note that when ${\mathcal
A}$ acts on itself by algebra multiplication, it is not in
general a Banach ${\mathcal A}$-${\mathfrak A}$-module. Indeed, if $\mathcal A$ is a commutative $\mathfrak A$-module and
acts on itself by multiplication from both sides, then it is also
a Banach ${\mathcal A}$-${\mathfrak A}$-module.

A bounded map $D: \mathcal A \longrightarrow  X $ is called a
$\mathfrak A$-{\it module derivation} if it is $\mathfrak A$-bimodule homomorphism and
$$D(a\pm b)=D(a)\pm D(b),\hspace{0.2cm}D(ab)=D(a)\cdot b+a\cdot D(b), \qquad (a,b \in \mathcal A).$$
 Note that $D$ is not necessarily
linear, but its boundedness (defined as the existence of $M>0$
such that $\|D(a)\|\leq M\|a\|$, for all $a\in \mathcal A$) still
implies its continuity, as it preserves subtraction. There are plenty of known examples of non linear,
additive derivations (see for instance \cite{s})
and some of these are also module derivations (at least for the trivial action). On the other hand,
when $\mathfrak A$ is unital (or even has a bounded approximate identity) then each module
derivation is automatically linear \cite[Proposition 2.1]{am1}.

If $X$ is a
commutative ${\mathcal A}$-${\mathfrak A}$-module, then each $x
\in  X $ defines an inner module derivation as $D_x(a)=a\cdot x-x\cdot a$
for all $a \in{\mathcal A}$. The Banach algebra ${\mathcal A}$ is called
{\it module amenable} (as an ${\mathfrak A}$-module) if for any
commutative Banach ${\mathcal A}$-${\mathfrak A}$-module $ X $,
each ${\mathfrak A}$-module derivation $D:\mathcal A
\longrightarrow  X^*$ is inner \cite{am1}.

\begin{definition}
Let $X$ be a commutative Banach $\mathcal A$-$\mathfrak A$-module, then a module derivation $D:\mathcal A\longrightarrow X$ is approximately inner, if there exists a net $\{x_i\}\subset X$ such that $D(a)=\lim_i(a\cdot x_i-x_i\cdot a)$ for all $a\in\mathcal A$.
\end{definition}
Note that $\{x_i\}$ in the above definition is not necessarily bounded. Let $\mathfrak A$ be a Banach algebra with character space
$\Phi_{\mathfrak A}$ and let $\mathcal A$ be a Banach $\mathfrak
A$-bimodule with compatible actions. Put $\varphi\in\Phi_{\mathfrak
A}$. Consider the linear map $\phi: \mathcal
A\longrightarrow\mathfrak A$ such that
$$\phi(ab)=\phi(a)\phi(b),\quad \phi(a\cdot\alpha)=\phi(\alpha\cdot a)=\varphi(\alpha)\phi(a)\quad(a\in \mathcal A,\,\alpha\in \mathfrak A).$$
We denote the set of all such maps by $\Omega_{\mathcal A}$.
\begin{definition}
Let $\mathcal A$ be a Banach $\mathfrak A$-bimodule and $\varphi\in\Phi_{\mathfrak A}$ and $\phi\in\Omega_{\mathcal A}$. We say the Banach space $X$ is $((\phi,\varphi),\mathcal A$-$\mathfrak A)$-bimodule if left module action $\mathcal A$ on $X$ given $a\cdot x=\phi(a)\cdot x$ and the action $\mathfrak A$ on $X$ given by $\alpha\cdot x=x\cdot \alpha=\varphi(\alpha)x$  for all $a\in\mathcal A,\alpha\in\mathfrak A$ and $x\in X$. Similarly, we say $X$ is $(\mathcal A$-$\mathfrak A,(\phi,\varphi))$-bimodule, if right module action $\mathcal A$ on $X$ given by $x\cdot a=\phi(a)\cdot x$ and action $\mathfrak A$ on $X$ given by $\alpha\cdot x=x\cdot\alpha=\varphi(\alpha)x$.
\end{definition}


\begin{definition}Let $\mathcal A$ be a Banach $\mathfrak A$-bimodule and $\varphi\in\Phi_{\mathfrak A}$ and $\phi\in\Omega_{\mathcal A}$. Then
\item[(i)] { $\mathcal A$ is module $(\phi,\varphi)$-amenable \emph{(}contractible\emph{)}, if every derivation $D:\mathcal A\longrightarrow X^*$ \emph{(}$D:\mathcal A\longrightarrow X$\emph{)} is inner for all $((\varphi,\phi),\mathcal A$-$\mathfrak A)$-bimodule $X$;}
\item[(ii)] {$\mathcal A$ is right \emph{[}left\emph{]} module character amenable (contractible), if every $((\phi,\varphi),\mathcal A$-$\mathfrak A)$-bimodule \emph{[}\emph{(}$\mathcal A$-$\mathfrak A,(\phi,\varphi)$\emph{)}-bimodule\emph{]} $X$, every derivation $D:\mathcal A\longrightarrow X^*$ \emph{(}$D:\mathcal A\longrightarrow X$\emph{)} is inner;}
\item[(iii)] { $\mathcal A$ is module character amenable \emph{(}contractible\emph{)} if it is both left and right module character amenable \emph{(}contractible\emph{)}.}
\end{definition}

\begin{definition}Let $\mathcal A$ be a Banach $\mathfrak A$-bimodule and $\varphi\in\Phi_{\mathfrak A}$ and $\phi\in\Omega_{\mathcal A}$. Then
\item[(i)] {$\mathcal A$ is approximate module $(\phi,\varphi)$-amenable, if every derivation $D:\mathcal A\longrightarrow X^*$ is approximately inner for all $((\varphi,\phi),\mathcal A$-$\mathfrak A)$-bimodule $X$;}
\item[(ii)] { $\mathcal A$ is right \emph{[}left\emph{]} approximate module character amenable, if every $((\phi,\varphi),\mathcal A$-$\mathfrak A)$-bimodule \emph{[}($\mathcal A$-$\mathfrak A,(\phi,\varphi)$)-bimodule\emph{]} $X$, every derivation $D:\mathcal A\longrightarrow X^*$ is approximately inner;}
\item[(iii)] {$\mathcal A$ is approximate module character amenable if it is both left and right approximately module character amenable.}
\end{definition}


\begin{remark}
Any statement about approximate module left character amenability \emph{(}contractibility\emph{)} turns into an analogous statement about approximate module right character amenability \emph{(}contractibility\emph{)} by simple replacing $\mathcal A$  by its opposite algebra $\mathcal A^{\text{op}}$.
\end{remark}

One should remember that if $\mathfrak A=\mathbb{C}$ and $\varphi$ is the identity
map then all of the above definitions coincide with their classical case (see \cite{hum}, \cite{kan1}, \cite{kan2}, \cite{mon} and \cite{ps}).



\section{Module character amenability}

Throughout this section, we assume that $\mathcal A$ is a Banach $\mathfrak{A}$-bimodule with compatible actions (\ref{e1}) and $\varphi\in \Phi_{\mathfrak{A}}, \phi\in\Omega_{\mathcal A}$. The canonical
images of $a\in\mathcal{A}$ and $\mathcal{A}$ in $\mathcal{A}^{**}$
are denoted by $\widehat{a}$ and $\widehat{\mathcal{A}}$,
respectively.
\begin{definition}\emph{\cite{bod}}
A bounded linear functional $m: \mathcal A^*\to \mathbb{C}$ is called
a {\it module $(\phi,\varphi)$-mean} on $\mathcal A^*$ if $m(f\cdot
a)=\varphi\circ\phi(a)m(f)$, $m(f\cdot\alpha)=\varphi(\alpha)m(f)$
and $m(\varphi\circ\phi)=1$ for each $f\in \mathcal A^*, a\in
\mathcal A$ and $\alpha\in \mathfrak A$.
\end{definition}

It is proved in \cite[Theorem 2.1]{bod}  that $\mathcal A$ is module $(\phi,\varphi)$-amenable if and only if there exists a module
$(\phi,\varphi)$-mean on $\mathcal A^*$. Two upcoming theorems which characterize module $(\phi,\varphi)$-amenability of a Banach algebra, are the module versions of \cite[Theorem 1.4]{kan1} and \cite[Theorem 1.2]{kan2}, respectively. The proofs are similar but we give them for the sake of completeness.

\begin{theorem}\label{th1}
Let $\mathcal A$ be a Banach $\mathfrak A$-module. Then the following statements are equivalent:
\begin{enumerate}
\item[(i)] {$\mathcal A$ is module $(\phi,\varphi)$-amenable;}
\item[(ii)] {There exists a bounded net $(a_j)$ in $\mathcal A$ such that
$\|aa_j-\varphi\circ\phi(a)a_j\| \rightarrow 0$, $\|\alpha \cdot a_j-\varphi(\alpha)a_j\| \rightarrow 0$ for all $a\in\mathcal A, \alpha\in\mathfrak A$
and $\varphi\circ\phi(a_j)=1$ for all $j$.}
\end{enumerate}
\end {theorem}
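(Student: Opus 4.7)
The plan is to go between statement (i) and statement (ii) through the characterization from \cite[Theorem 2.1]{bod}, which says that (i) is equivalent to the existence of a module $(\phi,\varphi)$-mean on $\mathcal{A}^*$, i.e., an element $m \in \mathcal{A}^{**}$ satisfying $m(f\cdot a)=\varphi\circ\phi(a)m(f)$, $m(f\cdot\alpha)=\varphi(\alpha)m(f)$ and $m(\varphi\circ\phi)=1$.

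For the implication (ii) $\Rightarrow$ (i), I would take any weak$^*$ cluster point $m\in\mathcal A^{**}$ of the bounded net $(\widehat{a_j})$. The normalization $\varphi\circ\phi(a_j)=1$ at once gives $m(\varphi\circ\phi)=1$. For $f\in\mathcal A^*$, $a\in\mathcal A$ and $\alpha\in\mathfrak A$, the identities
\[
 m(f\cdot a)=\lim_j f(aa_j),\qquad m(f\cdot\alpha)=\lim_j f(\alpha\cdot a_j)
\]
combined with $\|aa_j-\varphi\circ\phi(a)a_j\|\to 0$ and $\|\alpha\cdot a_j-\varphi(\alpha)a_j\|\to 0$ yield $m(f\cdot a)=\varphi\circ\phi(a)m(f)$ and $m(f\cdot\alpha)=\varphi(\alpha)m(f)$. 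Hence $m$ is a module $(\phi,\varphi)$-mean and \cite[Theorem 2.1]{bod} finishes the job.

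For the harder direction (i) $\Rightarrow$ (ii), I would start from a module $(\phi,\varphi)$-mean $m\in\mathcal A^{**}$ and use Goldstine's theorem to obtain a bounded net $(b_i)$ in $\mathcal A$ with $\widehat{b_i}\overset{w^*}\to m$. Since $\varphi\circ\phi\in\mathcal A^*$ and $m(\varphi\circ\phi)=1$, we have $\varphi\circ\phi(b_i)\to 1$, so after discarding initial indices and replacing $b_i$ by $b_i/\varphi\circ\phi(b_i)$, we may assume the resulting bounded net $(a_i)$ satisfies $\varphi\circ\phi(a_i)=1$ for all $i$. The weak$^*$ convergence $\widehat{a_i}\to m$ translates, via $f(aa_i)=(f\cdot a)(a_i)$ and the module-mean identities for $m$, into the weak convergences
\[
 aa_i-\varphi\circ\phi(a)a_i\overset{w}\to 0,\qquad \alpha\cdot a_i-\varphi(\alpha)a_i\overset{w}\to 0
\]
in $\mathcal A$, for every $a\in\mathcal A$ and $\alpha\in\mathfrak A$.

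The remaining step, which is the main technical point, upgrades weak convergence to norm convergence without losing boundedness or the normalization. For any finite sets $F\subset\mathcal A$, $G\subset\mathfrak A$ and any $\varepsilon>0$, I would consider the vector
\[
 T(a)=\bigl((ba-\varphi\circ\phi(b)a)_{b\in F},\,(\alpha\cdot a-\varphi(\alpha)a)_{\alpha\in G}\bigr)
\]
in the product Banach space $\mathcal A^{|F|+|G|}$. Since $T$ is linear and $T(a_i)\to 0$ weakly, Mazur's theorem places $0$ in the norm closure of the convex hull of $\{T(a_i)\}$, so there is a convex combination $a_{F,G,\varepsilon}=\sum_k\lambda_k a_{i_k}$ with $\|T(a_{F,G,\varepsilon})\|<\varepsilon$. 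Convexity preserves the bound on $(a_i)$ and gives $\varphi\circ\phi(a_{F,G,\varepsilon})=\sum_k\lambda_k=1$. Indexing by $(F,G,\varepsilon)$ ordered by reverse inclusion and decreasing $\varepsilon$ produces the required bounded net, completing the proof.
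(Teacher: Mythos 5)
Your proposal is correct and follows essentially the same route as the paper: the easy direction takes a weak$^*$ cluster point of $(\widehat{a_j})$, and the hard direction normalizes a Goldstine net against $\varphi\circ\phi$ and then applies Mazur's theorem to upgrade weak convergence to norm convergence while convexity preserves both the bound and the normalization. The only (cosmetic) difference is that you run Mazur in finite products $\mathcal A^{|F|+|G|}$ indexed over finite sets and $\varepsilon$, whereas the paper works once and for all in the full product space $\mathcal A^{\mathcal A}$ with the product topology; both settle the same key point.
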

\begin{proof} (i)$\Rightarrow$(ii) Suppose that $\mathcal A$ is module $(\phi,\varphi)$-amenable and thus a $(\phi,\varphi)$-mean $m\in\mathcal A^{**}$ exists. Take a net $(u_j)$ in $\mathcal A$ with the property that $\widehat{u}_j\rightarrow m$ in the $w^*$-topology of $\mathcal{A}^{**}$ in which $\|u_j\|\leq m$ for all $j$. Since $\langle \varphi\circ\phi , u_j\rangle\rightarrow\langle\varphi\circ\phi,m\rangle=1$, after passing to a subnet and replacing $u_j$ by $(\frac{1}{\varphi\circ\phi(u_j)})u_j$ we can assume that $\varphi\circ\phi(u_j)=1$ and $\|u_j\|\leq \|m\|+1$ for all $j$. Consider the product space ${\mathcal A}^{\mathcal A}$ endowed with the product of norm topologies. Then ${\mathcal A}^{\mathcal A}$ is a locally convex topological vector space. Define a linear map $T:\mathcal A\longrightarrow{\mathcal A}^{\mathcal A}$ via $T(x)=(a\cdot x-\varphi\circ\phi(a)x+\alpha\cdot x-\varphi(\alpha)x)_{a\in {\mathcal A}}$, for all  $x\in{\mathcal A},\alpha\in\mathfrak A$. Put $$B=\{x\in{\mathcal A};\,\, \|x\|\leq \|m\|+1 \,\, \text {and} \hspace{.2cm} \varphi\circ\phi(x)=1\}\subseteq\mathcal A.$$
Clearly, $B$ is convex and so $T(B)$ is a convex subset of ${\mathcal A}^{\mathcal A}$. Since $\varphi\circ\phi(u_j)=1$ for all $j$, the zero element of ${\mathcal A}^{\mathcal A}$ is contained in the closure of $T(B)$ with respect to the product of the weak topology. Thus for all $f\in\mathcal A^*$, we have
\begin{align*}
\langle f, au_j-\varphi\circ\phi(a)u_j+\alpha\cdot u_j-\varphi(\alpha)u_j\rangle&=\langle f,au_j\rangle-\varphi\circ\phi(a)\langle f,u_j\rangle+\langle f,\alpha\cdot u_j\rangle-\varphi(\alpha)\langle f,u_j\rangle\\
&=\langle f\cdot a,u_j\rangle-\varphi\circ\phi(a)\langle f,u_j\rangle+\langle f\cdot\alpha,u_j\rangle-\varphi(\alpha)\langle f,u_j\rangle\\
&\rightarrow\langle m,f\cdot a\rangle-\varphi\circ\phi(a)\langle m,f\rangle+\langle m,f\cdot\alpha\rangle-\varphi(\alpha)\langle m,f\rangle\\
&=0
\end{align*}
This product of weak topologies coincide with the weak topology on ${\mathcal A}^{\mathcal A}$; \cite[Theorem 4.3]{sc}.
By Mazur's theorem, the weak closure of $T(B)$ equals the closure of $T(B)$ in the norm topology on ${\mathcal A}^{\mathcal A}$. In other words,
$$0\in \overline{T(B)}^{w}=\overline{T(B)}^{\|.\|}.$$
It follows that there exists a bounded net $(a_j)_j$ in $B$ such that $\|T(a_j)_j\|\rightarrow0$ and this means that $\varphi\circ\phi(a_j)=1$ and
$$\|aa_j-\varphi\circ\phi(a)a_j\|\rightarrow 0\hspace{.5 cm}\text{and}\hspace{.5cm}\|\alpha \cdot a_j-\varphi(\alpha)a_j\|\rightarrow 0$$
for all $j$ and all $a\in \mathcal A,\alpha\in\mathfrak A$.

(ii)$\Rightarrow$(i) Assume that a net $(a_j)_j$ exists. Let $m$ be $w^*$-cluster point net $(a_j)_j$ in $\mathcal{A}^{**}$. Then
$$\langle m,\varphi\circ\phi\rangle=\lim_j\langle \varphi\circ\phi,a_j\rangle=1.$$
On the other hand,
\begin{align*}
\langle m,f\cdot a\rangle & =\lim_j\langle f\cdot a, a_j\rangle=\lim_j\langle f,aa_j\rangle\\
&=\lim_j\langle f,aa_j-\varphi\circ\phi(a)a_j\rangle+\lim_j\langle f,\varphi\circ\phi(a)a_j\rangle\\
&=\varphi\circ\phi(a)\langle m, f\rangle
\end{align*}
and
\begin{align*}
\langle m,f\cdot\alpha\rangle&=\lim_j\langle f\cdot\alpha,a_j\rangle=\lim_j\langle f,\alpha\cdot a_j\rangle\\
&=\lim_j\langle f,\alpha\cdot a_j-\varphi(\alpha)a_j\rangle+\lim_j\langle f,\varphi(\alpha)a_j\rangle\\
&=\varphi(\alpha)\lim_j\langle f,a_j\rangle=\varphi(\alpha)\langle m,f\rangle
\end{align*}
for all $a\in\mathcal A$ and $f\in \mathcal {A}^*$. This finishes the proof.
\end{proof}

\begin{theorem}\label{th2}
Let $\mathcal A$ be a Banach $\mathfrak A$-module. Then the following statements are equivalent:
\begin{enumerate}
\item[(i)] {$\mathcal A$ is module $(\phi,\varphi)$-amenable;}
\item[(ii)] {If $X$ is any Banach $\mathcal A$-$\mathfrak A$-module and $Y$ is any Banach $\mathcal A$-$\mathfrak A$-submodule of $X$ and $g\in Y^*$ such that the left action of $\mathfrak A$ and $\mathcal A$ on $g$ are $\alpha\cdot g=\varphi(\alpha)g$ and $a\cdot g=\varphi\circ\phi(a)g$, respectively, for all $a\in\mathcal A, \alpha\in\mathfrak A$, then $g$ extends to some $f\in X^*$ such that $a\cdot f=(\varphi\circ\phi)(a)f,\alpha\cdot f=\varphi(\alpha)f$ for all $a\in\mathcal A,\alpha\in\mathfrak A$.}
\end{enumerate}
\end {theorem}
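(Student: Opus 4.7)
The plan is to adapt, to the module setting, the Hahn--Banach characterization of character amenability due to Kaniuth--Lau--Pym, using Theorem~\ref{th1} for the forward direction and a canonical one-dimensional test submodule of $\mathcal{A}^*$ for the converse.

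For (i) $\Rightarrow$ (ii), I would first invoke Theorem~\ref{th1} to obtain a bounded net $(a_j)\subset\mathcal{A}$ with $\varphi\circ\phi(a_j)=1$, $\|a a_j-\varphi\circ\phi(a) a_j\|\to 0$, and $\|\alpha\cdot a_j-\varphi(\alpha) a_j\|\to 0$. Next, extend $g$ to some $f_0\in X^*$ by the classical Hahn--Banach theorem, and set $f_j:=a_j\cdot f_0$, that is, $f_j(x)=f_0(x\cdot a_j)$. The normalization $\varphi\circ\phi(a_j)=1$ forces $f_j|_Y=g$ \emph{exactly}; boundedness of $(a_j)$ gives boundedness of $(f_j)$ in $X^*$; and associativity together with the compatibility of the actions yield
\[
\|a\cdot f_j-\varphi\circ\phi(a) f_j\|\leq \|f_0\|\,\|a a_j-\varphi\circ\phi(a) a_j\|\to 0,
\]
and the analogous estimate $\|\alpha\cdot f_j-\varphi(\alpha) f_j\|\to 0$. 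By Banach--Alaoglu, a $w^*$-cluster point $f$ of $(f_j)$ exists in $X^*$. Pointwise evaluation gives $f|_Y=g$, and since norm convergence to zero implies $w^*$-convergence to zero while the dual actions $g\mapsto a\cdot g$ and $g\mapsto\alpha\cdot g$ are $w^*$-continuous, the two estimates transfer to the limit to give $a\cdot f=\varphi\circ\phi(a) f$ and $\alpha\cdot f=\varphi(\alpha) f$ on all of $X$.

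For (ii) $\Rightarrow$ (i), the strategy is to build a module $(\phi,\varphi)$-mean on $\mathcal{A}^*$ and conclude via \cite[Theorem~2.1]{bod}. View $X:=\mathcal{A}^*$ as a Banach $\mathcal{A}$-$\mathfrak{A}$-bimodule under the usual dual actions and set $Y:=\mathbb{C}\,(\varphi\circ\phi)$. Using that $\varphi\circ\phi$ is multiplicative on $\mathcal{A}$ and the relation $\phi(\alpha\cdot a)=\varphi(\alpha)\phi(a)$, a direct check shows that $Y$ is a (closed) sub-bimodule with $a\cdot(\varphi\circ\phi)=\varphi\circ\phi(a)\,(\varphi\circ\phi)$ and $\alpha\cdot(\varphi\circ\phi)=\varphi(\alpha)(\varphi\circ\phi)$, and the functional $g(\lambda(\varphi\circ\phi)):=\lambda$ on $Y$ trivially satisfies the hypotheses of (ii). The extension $f\in X^*=\mathcal{A}^{**}$ then satisfies $f(h\cdot a)=\varphi\circ\phi(a) f(h)$, $f(h\cdot\alpha)=\varphi(\alpha) f(h)$ for all $h\in\mathcal{A}^*$, and $f(\varphi\circ\phi)=g(\varphi\circ\phi)=1$, which is exactly the definition of a module $(\phi,\varphi)$-mean.

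The main obstacle I anticipate is in the forward direction: arranging the approximating extensions $f_j$ so that they restrict \emph{exactly} to $g$ on $Y$ (not merely approximately), which is why the normalization $\varphi\circ\phi(a_j)=1$ from Theorem~\ref{th1} is indispensable rather than a mere bounded approximate identity; and then verifying that the two norm-to-zero estimates survive passage to a $w^*$-cluster point, which requires the $w^*$-continuity of the dual actions in the dual variable. The converse direction is essentially routine once the correct one-dimensional test submodule $\mathbb{C}(\varphi\circ\phi)\subseteq\mathcal{A}^*$ has been singled out.
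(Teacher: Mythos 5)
Your proposal is correct and follows essentially the same route as the paper: for (i)$\Rightarrow$(ii) both take a Hahn--Banach extension $f_0$ of $g$, translate it by the net $(a_j)$ from Theorem~\ref{th1} (the paper's $u_j\cdot\tilde g$ is exactly your $a_j\cdot f_0$, with $\varphi\circ\phi(a_j)=1$ guaranteeing exact restriction to $g$), and pass to a $w^*$-cluster point; for (ii)$\Rightarrow$(i) both test against $Y=\mathbb C(\varphi\circ\phi)\subseteq\mathcal A^*$ to produce a module $(\phi,\varphi)$-mean. Your explicit appeal to $w^*$-continuity of the dual actions when passing to the cluster point is a point the paper glosses over, but it is the same argument.
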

\begin{proof}
(i)$\Rightarrow$(ii) Let $\tilde{g}\in X^*$ such that $\tilde{g}$ extends $g$ and $\|\tilde{g}\|=\|g\|$. Take $a\in \mathcal A$ satisfies $\varphi\circ\phi(a)=1$. Then $a\cdot\tilde{g}$ also extends $g$. Since $\mathcal A$ is module $(\phi,\varphi)$-amenable, by the pervious theorem there exists a net $(u_j)_j$ in $\mathcal A$ such that, for all $j$, we get $\varphi\circ\phi(u_j)=1$ and $\|u_j\|\leq C$ for some constant $C>0$ and $$\|au_j-\varphi\circ\phi(a)u_j\|\rightarrow0\hspace{.5cm},\hspace{.5cm}\|\alpha\cdot u_j-\varphi(\alpha)u_j\|\longrightarrow0$$ for all $a\in\mathcal A,\alpha\in\mathfrak A$; see Theorem \ref{th1}.
Then, $u_j\cdot\tilde{g}$ extends $g$ and we may assume that $\|u_j\cdot\tilde{g}\|\leq C\|g\|+1$ for all $j.$
After passing to a subnet, one can also assume that $u_j\cdot\tilde{g}\rightarrow f$ in the $w^*$-topology for some $f\in X^*.$
Obviously, $f$ extends g because for any $y\in Y$, we have
$$\langle f,y \rangle=\lim_j\langle u_j\cdot\tilde{g},y\rangle=\langle g,y\rangle$$
Taking $w^*$-limits for all $a\in \mathcal A,\alpha\in\mathfrak A$, we deduce that
\begin{align*}
a\cdot f&=\lim_j a\cdot(u_j\cdot\tilde{g})=\lim_j(a\cdot u_j)\cdot\tilde{g}\\
&=\lim_j\{(au_j)\cdot\tilde{g}-\varphi\circ\phi(a)u_j\cdot\tilde{g}+\varphi\circ\phi(a)u_j\cdot\tilde{g}\}\\
&=\lim_j\{au_j-\varphi\circ\phi(a)u_j\}\cdot\tilde{g}+\lim_j \varphi\circ\phi(a)u_j\cdot\tilde{g}\\
&=\varphi\circ\phi(a)f.
\end{align*}
Also,
\begin{align*}
\alpha\cdot f&=\lim_j \alpha\cdot(u_j\cdot\tilde{g})=\lim_j(\alpha\cdot u_j)\cdot\tilde{g}\\
&=\lim_j\{(\alpha u_j)\cdot\tilde{g}-\varphi(\alpha)u_j\cdot\tilde{g}+\varphi(\alpha)u_j\cdot\tilde{g}\}\\
&=\lim_j\{\alpha u_j-\varphi(\alpha)u_j\}\cdot\tilde{g}+\lim_j \varphi(\alpha)u_j\cdot\tilde{g}\\
&=\varphi(\alpha)f.
\end{align*}
for all $f\in X^*$ and all $\alpha\in\mathfrak A$,Therefore (ii) holds.

(ii)$\Rightarrow$(i)  Take $X=\mathcal A^{*}$ and $Y=\mathbb{C}(\varphi\circ\phi)$.
 Let $n\in Y^*$ be defined by $\langle n,\varphi\circ\phi\rangle=1.$ Then, the left action of $\mathcal A$ on $n$ will be by $a\cdot n=\varphi\circ\phi(a)n$ and $\alpha\cdot n=\varphi(\alpha)n$. By assumption, there exists $m\in \mathcal A$ such that $m|_Y=n$ and
 $a\cdot m=\varphi\circ\phi(a)m,\alpha\cdot m=\varphi(\alpha)m$ for all $a\in\mathcal A,\alpha\in\mathfrak A$. We have
 $$\langle m,\varphi\circ\phi\rangle=\langle n,\varphi\circ\phi\rangle=1,$$
 $$\langle m, f\cdot a\rangle=\langle a\cdot m,f\rangle=\langle\varphi\circ\phi(a)m, f\rangle=\varphi\circ\phi(a)\langle m,f\rangle,$$
 and
 $$\langle m,f\cdot\alpha\rangle=\langle\alpha\cdot m,f\rangle=\langle\varphi(\alpha)m,f\rangle=\varphi(\alpha)\langle m,f\rangle$$
for all $\alpha\in\mathfrak A$ and all $a\in\mathcal A$. Therefore, $m$ is a $(\phi,\varphi)$-mean on $\mathcal A^{**}$.
\end{proof}


Let $\mathfrak{F}\in \mathfrak A^{**}$ and $G\in
\mathcal A^{**}$. Take the nets $(\alpha_j)\subset \mathfrak A$ and
$(a_k)\subset \mathcal A$ such that $\widehat{\alpha}_j
\stackrel{w^*}{\longrightarrow} \mathfrak{F}$ and $\widehat{a}_k
\stackrel{w^*}{\longrightarrow} G$. We consider the module actions
$\mathfrak A^{**}$ on $\mathcal A^{**}$ as follows:

$\hspace{1.5cm}\mathfrak{F}\cdot G = w^*-\lim_j w^*-\lim_k
\alpha_j\cdot a_k, \quad G\cdot\mathfrak{F}=w^*-\lim_k w^*-\lim_j
a_k\cdot\alpha_j.$

Let $\phi\in \Omega_{\mathcal A}$ and $\varphi\in\Phi_{\mathfrak
A}$. If $\phi^{**}$ and
$\varphi^{**}$ are the double conjugate of $\phi$
and $\varphi$, respectively, then $\phi^{**}\in
\Omega_{\mathcal A^{**}}$ and $\varphi^{**}\in\Phi_{\mathfrak
A^{**}}$. We denote by $\square$ the first Arens product on $\mathcal{A}^{**}$, the second dual of $\mathcal{A}$.


\begin{proposition}Let $\mathcal A$ be Banach ${\mathfrak A}$-module and let
$\phi\in \Omega_{\mathcal A}$ and $\varphi\in\Phi_{\mathfrak A}$.
Then $\mathcal A$ is module $(\phi,\varphi)$-amenable if and only
if $\mathcal A^{**}$ is module
$(\phi^{**},\varphi^{**})$-amenable.
\end{proposition}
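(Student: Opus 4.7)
The plan is to use the mean characterization from \cite[Theorem~2.1]{bod}: $\mathcal{A}$ is module $(\phi,\varphi)$-amenable iff $\mathcal{A}^*$ admits a module $(\phi,\varphi)$-mean, and likewise $\mathcal{A}^{**}$ is module $(\phi^{**},\varphi^{**})$-amenable iff $\mathcal{A}^{***}$ admits a module $(\phi^{**},\varphi^{**})$-mean. Thus the task reduces to producing each mean from the other. A basic identity I will use throughout is that the canonical image in $\mathcal{A}^{***}$ of the functional $\varphi\circ\phi \in \mathcal{A}^*$ coincides with $\varphi^{**}\circ\phi^{**}$: indeed, for $\Phi \in \mathcal{A}^{**}$, $\Phi(\varphi\circ\phi) = \Phi(\phi^*(\varphi)) = \phi^{**}(\Phi)(\varphi) = \varphi^{**}(\phi^{**}(\Phi))$.

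For the forward direction, let $m \in \mathcal{A}^{**}$ be a module $(\phi,\varphi)$-mean, and set $M = \widehat{m} \in \mathcal{A}^{****}$, so $M(F) = F(m)$ for each $F \in \mathcal{A}^{***}$. Then $M(\varphi^{**}\circ\phi^{**}) = m(\varphi\circ\phi) = 1$. For the module covariance, compute $\Phi \square m$ under the first Arens product: for $f \in \mathcal{A}^*$, $(\Phi \square m)(f) = \Phi(m \cdot f)$, and the identity $(m \cdot f)(a) = m(f \cdot a) = \varphi\circ\phi(a)\, m(f)$ gives $m \cdot f = m(f)\,(\varphi\circ\phi)$. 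Therefore $\Phi \square m = (\varphi^{**}\circ\phi^{**})(\Phi)\, m$ in $\mathcal{A}^{**}$, and applying any $F \in \mathcal{A}^{***}$ yields $M(F \cdot \Phi) = F(\Phi \square m) = (\varphi^{**}\circ\phi^{**})(\Phi)\, M(F)$. The analogous computation based on $m(f\cdot\alpha) = \varphi(\alpha)\, m(f)$, using the bidual extension of the $\mathfrak{A}$-action, gives $\mathfrak{F} \cdot m = \varphi^{**}(\mathfrak{F})\, m$ and hence $M(F\cdot\mathfrak{F}) = \varphi^{**}(\mathfrak{F})\, M(F)$, so $M$ is a module $(\phi^{**},\varphi^{**})$-mean on $\mathcal{A}^{***}$.

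For the reverse direction, given a module $(\phi^{**},\varphi^{**})$-mean $M \in \mathcal{A}^{****}$, I define $m \in \mathcal{A}^{**}$ by $m(f) = M(\widehat{f})$ for $f \in \mathcal{A}^*$, with $\widehat{f} \in \mathcal{A}^{***}$ the canonical image. The compatibility of the embedding $\mathcal{A}^* \hookrightarrow \mathcal{A}^{***}$ with the module actions, namely $\widehat{f \cdot a} = \widehat{f} \cdot \widehat{a}$ and $\widehat{f \cdot \alpha} = \widehat{f} \cdot \widehat{\alpha}$, together with the identity $\widehat{\varphi\circ\phi} = \varphi^{**}\circ\phi^{**}$ noted above, then gives the mean identities for $m$ directly.

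The main technical obstacle is the careful bookkeeping of the first Arens product and the bidual extensions of the various module actions. One must check that $\phi^{**} \in \Omega_{\mathcal{A}^{**}}$, that $\varphi^{**}$ remains a character on $(\mathfrak{A}^{**},\square)$, and that the compatible-action conditions \eqref{e1} lift to the biduals so that the canonical pairings used above are well-defined. These are standard consequences of the separate weak-$*$-continuity of the Arens product and of the defining property of the adjoint operators, and once they are in place the algebraic manipulations above give the result.
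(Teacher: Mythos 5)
Your proof is correct and follows essentially the same route as the paper: embed the mean $m$ canonically into $\mathcal A^{****}$ and verify the $(\phi^{**},\varphi^{**})$-mean identities via the first Arens product for the forward direction, and restrict the mean from $\mathcal A^{***}$ to $\mathcal A^{*}$ for the converse. Your algebraic computation $m\cdot f = m(f)(\varphi\circ\phi)$, hence $\Phi\,\square\, m=(\varphi^{**}\circ\phi^{**})(\Phi)\,m$, is just a cleaner rendition of the paper's iterated weak-$*$ limit argument with nets $(a_j)$ and $(f_k)$.
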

\begin{proof}Let $\mathcal A$ be module $(\phi,\varphi)$-amenable
and $m$ be a $(\phi,\varphi)$-mean in $\mathcal A^{**}$. For each
$F\in \mathcal A^{**}$ and $\Psi\in A^{***}$, take bounded nets
$(a_j)\in \mathcal A$ and $(f_k)\in \mathcal A^{*}$ with $\widehat{a}_j \stackrel{w^*}{\longrightarrow}F$ and $ \widehat{f}_k
\stackrel{w^*}{\longrightarrow}\Psi$. We consider $m$ as an
element $\widehat{m}$ of $\mathcal A^{****}$. Hence,
$\widehat{m}(\varphi^{**}\circ\phi^{**})=m(\varphi\circ\phi)=1$
and
\begin{align*}
\langle \widehat{m}, \Psi\cdot F\rangle
&=\langle \Psi, F\square m\rangle=\lim_k\langle F\square m,f_k\rangle\\
&=\lim_k\langle F,m\cdot f_k\rangle=\lim_k\lim_j\langle m\cdot
f_k, a_j\rangle\\
&=\lim_k\lim_j\langle m,f_k\cdot
a_j\rangle=\lim_k\lim_j\varphi\circ\phi(a_j)\langle m,f_k\rangle\\
&=\lim_j(\varphi\circ\phi)(a_j)\lim_k\langle
m,f_k\rangle=(\varphi^{**}\circ\phi^{**})(F)\langle
\widehat{m},\Psi\rangle.
\end{align*}
Now, for any $\mathfrak{F}\in \mathfrak A^{**}$ and $\Psi\in
A^{***}$, similar to the above computations, we can show that
$\langle \widehat{m}, \Psi\cdot \mathfrak
F\rangle=\varphi^{**}(\mathfrak F)\langle
\widehat{m},\Psi\rangle$. Thus $\mathcal A^{**}$ is module
$(\phi^{**},\varphi^{**})$-amenable.

For the converse, assume that $M$ is a
$(\phi^{**},\varphi^{**})$-mean on $\mathcal A^{***}$
satisfying $M(\varphi\circ\phi)=1, \langle M, \Psi\cdot
F\rangle=(\varphi^{**}\circ\phi^{**})(F)\langle
M,\Psi\rangle$ and $\langle M, \Psi\cdot \mathfrak
F\rangle=\varphi^{**}(\mathfrak F)\langle M,\Psi\rangle$ for
all $F\in \mathcal A^{**},\Psi\in A^{***}$ and $\mathfrak{F}\in
\mathfrak A^{**}$. Then the restriction of $M$ to $\mathcal A^{*}$
is a $(\phi,\varphi)$-mean on $\mathcal A^{*}$.
\end{proof}


Let $\mathcal A$ be a Banach $\mathfrak{A}$-bimodule with compatible actions (\ref{e1}) and $\varphi\in \Phi_{\mathfrak{A}}, \phi\in\Omega_{\mathcal A}$. Similar to the classical case, by applying $w^*$-continuity, it is easily verfied that an element $m\in \mathcal A^{**}$ is a  module $(\phi,\varphi)$-mean for $\mathcal A$  if and only if for all $n\in\mathcal A^{**}$ and $\alpha\in \mathfrak A$, we have
$$nm=(\varphi^{**}\circ\phi^{**})(n)m\,\, \text{and}\,\, \alpha\cdot m=\varphi(\alpha)m.$$
Sometimes a module $(\phi,\varphi)$-mean can lies in $\mathcal A$. However, this motivates us to introduce a new notion of module $(\phi,\varphi)$-mean. We say a $\mathfrak{A}$-bimodule $\mathcal A$ has a module $(\phi,\varphi)$-mean $m$ in itself if $am=\varphi\circ\phi(a)m\,\, \text{and}\,\, \alpha\cdot m=\varphi(\alpha)m$ for all $a\in\mathcal A$ and $\alpha\in \mathfrak A$. Note that a Banach $\mathfrak A$-module $\mathcal A$ has a $(\phi,\varphi)$-mean if and only if $\mathcal A^{**}$ has a $(\phi^{**},\varphi^{**})$-mean which lies in $\mathcal A^{**}$.

Now, let $\mathcal I$ and $\mathfrak I$ be ideals in $\mathcal A$ and $\mathfrak A$, respectively. If $\mathcal I$ is an $\mathfrak A$-submodule of $\mathcal A$ such that $\mathcal I\subseteq \text{ker}\phi$, then $\mathcal A/\mathcal I$ be a Banach $\mathfrak{A}/\mathfrak I$-bimodule with compatible actions (\ref{e1}) and the mapping $\tilde{\phi}:\mathcal A/\mathcal I\longrightarrow\mathfrak{A}/\mathfrak I; a+\mathcal I\mapsto \phi(a)+\mathfrak I$ is well-defined.


\begin{proposition}\label{pro1} With the above hypotheses, let $\mathfrak I\subseteq \emph{ker}\varphi$ and $\tilde{\varphi}:\mathfrak A/\mathfrak I\longrightarrow\mathbb{C}$ be the homomorphism induced by $\varphi$. If $\mathcal I$ has a right identity and $\mathcal A/\mathcal I$ has a module $(\tilde{\varphi},\tilde{\phi})$-mean in $\mathcal A/\mathcal I$, then $\mathcal A$ has a module $(\varphi,\phi)$-mean in $\mathcal A$.
\end{proposition}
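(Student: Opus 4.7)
The plan is to promote the given mean in the quotient to an honest mean living in $\mathcal A$ by an explicit subtraction trick using the right identity of $\mathcal I$. Let $e$ be a right identity of $\mathcal I$, and fix a representative $n\in\mathcal A$ of the given module $(\tilde\phi,\tilde\varphi)$-mean $\bar n=n+\mathcal I$ in $\mathcal A/\mathcal I$. Since $\mathcal I\subseteq\ker\phi$ and $\mathfrak I\subseteq\ker\varphi$, the map $\tilde\varphi\circ\tilde\phi$ is exactly the factorization of $\varphi\circ\phi$ through $\mathcal A/\mathcal I$, so the defining conditions on $\bar n$ read, lifted to $\mathcal A$,
\[
an-(\varphi\circ\phi)(a)\,n\in\mathcal I,\quad \alpha\cdot n-\varphi(\alpha)\,n\in\mathcal I,\quad (\varphi\circ\phi)(n)=1,
\]
for every $a\in\mathcal A$ and $\alpha\in\mathfrak A$.

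The candidate I would propose is
\[
m:=n-ne\in\mathcal A.
\]
The mechanism making this work is that right multiplication by $e$ acts as the identity on $\mathcal I$: if $x\in\mathcal I$ then $xe=x$, hence $x-xe=0$. Applying this with $x=an-(\varphi\circ\phi)(a)\,n$ yields $(an)e=an-(\varphi\circ\phi)(a)\,n+(\varphi\circ\phi)(a)\,ne$, and after substituting into $am=an-(an)e$ the defect cancels, giving
\[
am=(\varphi\circ\phi)(a)\,n-(\varphi\circ\phi)(a)\,ne=(\varphi\circ\phi)(a)\,m.
\]
The parallel computation for the $\mathfrak A$-action needs the compatibility axiom (\ref{e1}) for the left action of $\mathfrak A$ on a product in $\mathcal A$, namely $\alpha\cdot(ne)=(\alpha\cdot n)\,e$; applying the same $x-xe=0$ observation to $x=\alpha\cdot n-\varphi(\alpha)\,n\in\mathcal I$ then yields $\alpha\cdot m=\varphi(\alpha)\,m$. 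Finally, because $e\in\mathcal I\subseteq\ker\phi$ one has $\phi(e)=0$, so $(\varphi\circ\phi)(m)=(\varphi\circ\phi)(n)-(\varphi\circ\phi)(n)(\varphi\circ\phi)(e)=1$.

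There is no genuinely hard step; the whole content of the argument is the construction $m=n-ne$, which is engineered precisely to absorb the $\mathcal I$-valued error terms that the hypothesis on $\bar n$ provides. The only subtlety that deserves an explicit mention is invoking (\ref{e1}) to move $\alpha$ past $e$ in $\alpha\cdot(ne)$; this is automatic in the paper's setup and is the reason one must work with \emph{compatible} $\mathfrak A$-actions rather than arbitrary module structures.
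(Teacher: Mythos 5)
Your proof is correct and is essentially the paper's own argument: the same candidate $n-ne$, the same use of the fact that right multiplication by the right identity $e$ fixes $\mathcal I$ to annihilate the $\mathcal I$-valued defects, and the same appeal to the compatibility condition (\ref{e1}) for the $\mathfrak A$-action. The only (immaterial) difference is bookkeeping: the paper applies $x=xe$ to the defect of $m-me$ directly and notes $(m-me)e=0$, while you apply it to the defect of $n$ and substitute.
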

\begin{proof}
Let $\mathcal P:\mathcal A\longrightarrow\mathcal A/\mathcal I$ and $\mathfrak P:\mathfrak A\longrightarrow\mathfrak A/\mathfrak I$ be the canonical projections. Assume that $e$ is a right identity for $\mathcal I$ and $m\in\mathcal A$  such that $\mathcal P(m)$ is module $(\tilde{\varphi},\tilde{\phi})$-mean for $\mathcal A/\mathcal I$. Clearly, $\mathcal P(e)=0$. Thus for any $a\in\mathcal A$, we get

\begin{align*}
\mathcal P(a)\mathcal P(m-me)&=\mathcal P(a)\mathcal P(m)=\tilde{\varphi}\circ\tilde{\phi}(\mathcal P(a))\mathcal P(m)\\
&=(\varphi\circ\phi)(a)\mathcal P(m)=(\varphi\circ\phi)(a)\mathcal P(m-me).
\end{align*}
This implies that $a(m-me)-\varphi\circ\phi(m-me)\in \mathcal I$. Since $e$ is a right identity for $\mathcal I$ and $(m-me)e=0$, we have
$$a(m-me)-\varphi\circ\phi(a)(m-me)=a(m-me)e-\varphi\circ\phi(a)(m-me)e=0.$$
So $a(m-me)=(\varphi\circ\phi)(a)(m-me)$. Also, for each $\alpha\in \mathfrak A$, we have
\begin{align*}
\mathcal P(\alpha\cdot(m-me))&=\mathfrak P(\alpha)\mathcal P(m-me)=\mathfrak P(\alpha)\mathcal P(m)\\
&=\tilde{\varphi}(\mathfrak P(\alpha))\mathcal P(m)=\varphi(\alpha)\mathcal P(m)\\
&=\varphi(a)\mathcal P(m-me).
\end{align*}
Hence $\alpha\cdot(m-me)-\varphi(a)(m-me)\in \mathfrak I$. Now, similar the above we can show that $\alpha\cdot(m-me)=\varphi(a)(m-me)$. Furthermore,
\begin{align*}
\langle m-me,\varphi\circ\phi\rangle&=\langle m-me,\tilde{\varphi}\circ\tilde{\phi}\circ\mathcal P\rangle\\
&=\langle\tilde{\varphi}\circ\tilde{\phi},\mathcal P(m-me)\rangle\\
&=\langle \tilde{\varphi}\circ\tilde{\phi},\mathcal P(m)\rangle\\
&=1
\end{align*}
Therefore, $m-me$ is module $(\varphi,\phi)$-mean in $\mathcal A$.
\end{proof}


The following result indicates a necessary and sufficient for being module $(\phi,\varphi)$-amenability of a Banach algebra.
\begin{proposition}
$\emph{ker}(\varphi\circ\phi)$ has a bounded right approximate identity if and only if $\mathcal A$ is module $(\phi,\varphi)$-amenable and has a right approximate idnetity.
\end{proposition}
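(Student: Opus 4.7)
The strategy is to translate module $(\phi,\varphi)$-amenability into its net formulation from Theorem \ref{th1}, and then combine the resulting net with a bounded right approximate identity of the ideal (respectively of $\mathcal A$) to produce the desired approximate identity on the other side.

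For the forward direction, assume $I:=\ker(\varphi\circ\phi)$ has a bounded right approximate identity $(e_i)_i$, and fix $a_0\in\mathcal A$ with $\varphi\circ\phi(a_0)=1$. I would set
$$u_i:=a_0-a_0e_i, \qquad f_i:=a_0+e_i-a_0e_i.$$
For any $a\in\mathcal A$, put $b:=aa_0-\varphi\circ\phi(a)a_0$; since $\varphi\circ\phi(b)=0$ we have $b\in I$, and a direct expansion gives
$$au_i-\varphi\circ\phi(a)u_i=b-be_i\to 0.$$
Using the compatibility relation $\alpha\cdot(a_0e_i)=(\alpha\cdot a_0)e_i$ from (\ref{e1}) and the fact that $d:=\alpha\cdot a_0-\varphi(\alpha)a_0$ satisfies $\varphi\circ\phi(d)=\varphi(\alpha)-\varphi(\alpha)=0$ (because $\phi(\alpha\cdot a_0)=\varphi(\alpha)\phi(a_0)$), the analogous computation gives $\|\alpha\cdot u_i-\varphi(\alpha)u_i\|\to 0$ for every $\alpha\in\mathfrak A$. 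Since $\varphi\circ\phi(u_i)=1$ and $(u_i)$ is bounded, Theorem \ref{th1} yields module $(\phi,\varphi)$-amenability. The parallel calculation, now based on $a-aa_0\in I$, gives $af_i=aa_0+(a-aa_0)e_i\to a$, so $(f_i)_i$ is a bounded right approximate identity for $\mathcal A$.

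For the converse, take a bounded net $(u_j)_j$ as provided by Theorem \ref{th1}(ii) and a bounded right approximate identity $(f_\beta)_\beta$ for $\mathcal A$, and define
$$g_{j,\beta}:=f_\beta-u_jf_\beta.$$
Since $\varphi\circ\phi(u_j)=1$, we have $\varphi\circ\phi(g_{j,\beta})=0$, so $g_{j,\beta}\in I$, and the net is clearly uniformly bounded. For $x\in I$ we have $\varphi\circ\phi(x)=0$, so Theorem \ref{th1}(ii) gives $\|xu_j\|\to 0$; combined with $\|xf_\beta-x\|\to 0$, the estimate
$$\|xg_{j,\beta}-x\|\le\|xf_\beta-x\|+\|xu_j\|\,\|f_\beta\|$$
shows that for each finite $F\subset I$ and each $\epsilon>0$ some index $(j,\beta)$ makes $\|xg_{j,\beta}-x\|<\epsilon$ uniformly on $F$. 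Reindexing by such pairs $(F,\epsilon)$ under the obvious refinement order then produces a bounded right approximate identity for $I$. The only real subtlety is packaging this iterated limit into a single directed net while preserving the uniform bound; this is the main technical step, and the rest of the argument is routine.
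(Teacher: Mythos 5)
Your proof is correct and, for the forward implication, is essentially the paper's own argument: you use the same elements $a_0-a_0e_i$ and $a_0+e_i-a_0e_i$ (the paper's $b_j$ and $e_j$, up to relabeling) together with Theorem \ref{th1}. For the converse the paper simply cites \cite[Proposition 2.2]{kan1} with $\varphi$ replaced by $\varphi\circ\phi$; you have written out that argument explicitly via $g_{j,\beta}=f_\beta-u_jf_\beta$, and it is sound. One small caveat: your converse takes the right approximate identity of $\mathcal A$ to be \emph{bounded}, which the argument genuinely needs (both to keep $(g_{j,\beta})$ bounded and to control $\|xu_j\|\,\|f_\beta\|$); the statement omits the word ``bounded,'' but this is evidently the intended reading, since the cited result assumes it and the forward direction already produces a bounded right approximate identity for $\mathcal A$.
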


\begin{proof}Assume that $(u_j)$ is a bounded right
approximate identity for $\ker(\varphi\circ \phi)$. Choose $a_0\in {\mathcal A}$ such that
$\varphi\circ \phi(a_0)=1$. Let $b_j=a_0-a_0u_j$. Then for each $b\in \ker(\varphi\circ \phi)$ we have
$$
\|bb_j\|=\|(ba_0)-(ba_0)u_j\|\rightarrow 0.
$$
It is clear that $(aa_0-\varphi\circ \phi(a)a_0), (\alpha\cdot a_0-\varphi(\alpha)a_0)\in \ker(\varphi\circ \phi)$ for all $a\in{\mathcal A}$ and $\alpha\in
{\mathfrak A}$.
Putting $a_j=a_0b_j$ for all $j$, we have $\varphi\circ\phi(a_j)=1$ and
$$\|aa_j-\varphi\circ\phi(a)a_j\|=\|(aa_0-\varphi\circ\phi(a)a_0)b_j\|\rightarrow 0,$$
$$ \|\alpha\cdot a_j-\phi(\alpha)a_j\|=\|(\alpha\cdot a_0-\phi(\alpha)a_0)b_j\|\rightarrow 0$$
for all $a\in{\mathcal A}$ and $\alpha\in {\mathfrak A}$. Therefore, ${\mathcal A}$ is  module $(\phi,\varphi)$-amenable by Theorem \ref{th1}.
Now, by letting $e_j=u_j-a_0u_j+a_0$ for all $j$,
we get $a-aa_0\in \ker(\varphi\circ \phi)$, and thus
$$
\|ae_j-a\|=\|(a-aa_0)u_j-(a-aa_0)\|\rightarrow 0
$$
for all $a\in{\mathcal A}$. It follows that $(e_j)$
is a right approximate identity for ${\mathcal A}$.

For the converse, it follows firsty from the definition that $\mathcal A$ is $(\phi,\varphi)$-amenable. Now, the proof of \cite[Proposition 2.2]{kan1} can be repeated to obtain the result by interchanging $\varphi$ by $\varphi\circ\phi$.
\end{proof}

Consider the module projective tensor product ${\mathcal A}\widehat
\otimes _{\mathfrak A} {\mathcal A}$ which is isomorphic to the
quotient space $(\mathcal A \widehat \otimes \mathcal A)/{I_\mathcal A}$, where
$I_\mathcal A$ is the closed ideal of the projective tensor product $\mathcal A
\widehat \otimes \mathcal A$ generated by elements of the form $
a\cdot\alpha \otimes b-a \otimes\alpha \cdot b$ for $ \alpha\in
{\mathfrak A},a,b\in{\mathcal A}$ \cite{rie}. Also consider the
closed ideal $J_\mathcal A$ of ${\mathcal A}$ generated by elements of the form
$ (a\cdot\alpha) b-a(\alpha \cdot b)$ for $ \alpha\in {\mathfrak
A},a,b\in{\mathcal A}$. Then $I_\mathcal A$ and $J_\mathcal A$ are ${\mathcal
A}$-submodules and ${\mathfrak A}$-submodules of $\mathcal A
\widehat \otimes \mathcal A$ and $\mathcal A$, respectively, and the
quotients $\mathcal A \widehat \otimes_{\mathfrak A} \mathcal A$ and
$\mathcal A/J_\mathcal A$ are ${\mathcal A}$-modules and ${\mathfrak
A}$-modules. Also $\mathcal A/J_\mathcal A$ is a Banach $\mathcal
A$-${\mathfrak A}$-modules with the canonical action. Throughout,  We shall denote $I_{\mathcal A}$ and
$J_{\mathcal A}$ by $I$ and $J$, respectively, if there is no risk of confusion.

A discrete semigroup $S$ is called
an {\it inverse semigroup} if for each $ s \in S $ there is a unique
element $s^* \in S$ such that $ss^*s=s$ and $s^*ss^*=s^*$. An
element $e \in S$ is called an {\it idempotent} if $e=e^*=e^2$. The
set of idempotents of $S$ is denoted by $E$.

For an inverse semigroup $S$, the
ideal $J_{\ell ^{1}(S)}$ (or $J$)  is the closed linear span of
$\{\delta_{set}-\delta_{st} : s,t \in S,  e \in E \}.$ We consider an equivalence relation on $S$ as follows:
$$s\approx t \Longleftrightarrow \delta_s-\delta_t \in J \qquad (s,t \in
S).$$
In this case the quotient ${S}/{\approx}$ is a
discrete group (see \cite{abe} and \cite{pou}). In fact,
${S}/{\approx}$ is homomorphic to the maximal group homomorphic
image $G_S$ \cite{mn} of $S$ \cite{pou2}. In particular, $S$ is
amenable if and only if ${S}/{\approx}=G_S$ is amenable \cite{dun, mn}. As in \cite[Theorem 3.3]{ra1}, we may observe that $\ell
^{1}(S)/J\cong {\ell ^{1}}(G_S)$. With the notations of the previous
section, $ \ell ^{1}(S)/{J}$ is a commutative $ \ell
^{1}(E)$-bimodule with the following actions
$$\delta_e\cdot\delta_{[s]} = \delta_{[s]}, \,\,\delta_{[s]}\cdot\delta_e = \delta_{[se]} \quad(s \in S,  e \in E),$$
where $[s]$ denotes the equivalence class of $s$ in $G_S$.

In the next example, we bring a module character amenable Banach algebra which is not character amenable.
\begin{example}
Let $G$ be a group with identity $e$, and let $\Gamma$ be a non-empty
set. Then the Brandt inverse semigroup corresponding to $G$ and
$\Gamma$, denoted by $S={\mathcal{M}}(G,\Gamma)$, is the collection of all
$\Gamma \times \Gamma$ matrices $(g)_{ij}$ with $g\in G$ in the
$(i,j)^{\textmd{th}}$ place and $0$ (zero) elsewhere and the $\Gamma
\times \Gamma$ zero matrix $0$. Multiplication in $S$ is given by the
formula
$$(g)_{ij}(h)_{kl}=\left\{\begin{array}{l} (gh)_{il} \qquad {\rm {if}}\
j=k \\ \ \ 0 \qquad \quad \, {\rm{if}}\ j \neq k
\end{array}\right. \qquad\qquad (g,h\in G, \, i,j,k,l\in \Gamma),$$ and
${(g)^*_{ij}}=(g^{-1})_{ji}$ and $0^*=0$. The set of all idempotents
is $E_S=\{(e)_{ii}:i\in \Gamma\}\bigcup \{0\}$.  It is shown in
\emph{\cite{pou}} that $G_S$ is the trivial group. Hence
${\ell ^{1}}(S)$ is module amenable by  \emph{\cite[Theorem 3.1]{am1}} and thus it is module character amenable. But if the index set $\Gamma$ is
infinite, then ${\ell ^{1}}(S)$ is not character amenable \emph{\cite[Corollary 2.7]{ef}}. 
\end{example}


Let $\mathcal N$ be the closed ideal of ${\mathcal
A^{**}}$ generated by $(F \cdot \alpha)\square G-F \square
(\alpha\cdot G)$, for $F,G\in{\mathcal A}^{**}$ and $\alpha\in
{\mathfrak A}$. Then clearly $J\subseteq \mathcal N$. It follows from the proof of \cite[Theorem 3.4]{abeh} that $ \mathcal N \subseteq J^{\perp \perp}$, and thus the map $\lambda : \mathcal A^{**}/\mathcal N\longrightarrow\mathcal A^{**}/J^{\perp \perp}; F+\mathcal N \mapsto F+ J^{\perp \perp}$ is well-defined. It is also a bounded $\mathcal A$-$\mathfrak A$-module homomorphism which is an epimorphism.

In analogy with the classical case we characterize the inverse semigroup that the second dual of its algebras is module character amenable.

\begin{theorem}\label{thj}
Let $S$ be an
inverse semigroup with the set of idempotents $E$.
 Then, $\ell ^{1}(S)^{**}$ is module character amenable \emph{(}as an $\ell
^{1}(E)$-module with trivial left action\emph{)} if and only if the
discrete group $G_S$ is finite.
\end{theorem}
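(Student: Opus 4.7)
The plan is to prove both directions separately.

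For the ``if'' direction, assume $G_S$ is finite. Since $E$ is a semilattice it is amenable as a semigroup, and $G_S$ being finite is certainly amenable, so by \cite{am1} and the characterization of module amenability of $\ell^1(S)$ via amenability of $G_S$, the semigroup algebra $\ell^1(S)$ is module amenable. The finiteness of $G_S$ moreover makes $\ell^1(S)/J\cong\ell^1(G_S)$ finite-dimensional. I would then exhibit, for every relevant pair $(\phi,\varphi)\in\Omega_{\ell^1(S)^{**}}\times\Phi_{\ell^1(E)^{**}}$, a bounded net in $\ell^1(S)^{**}$ satisfying the two approximation identities of Theorem \ref{th1}: the finite-dimensional quotient supplies an averaging element in $\ell^1(G_S)$ which is then lifted back to $\ell^1(S)^{**}$ using the bounded approximate identity of $\ell^1(S)$ (which, at the level of the second dual, yields a right identity). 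Theorem \ref{th1} would then give module $(\phi,\varphi)$-amenability of $\ell^1(S)^{**}$ for every admissible pair, hence module character amenability.

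For the ``only if'' direction, assume $\ell^1(S)^{**}$ is module character amenable. The closed ideal $J$ with $\ell^1(S)/J\cong\ell^1(G_S)$ induces, on taking second duals and quotienting by $J^{\perp\perp}$, a continuous algebra epimorphism
\[
\pi\colon \ell^1(S)^{**}\longrightarrow \ell^1(G_S)^{**}
\]
which is simultaneously an $\ell^1(E)^{**}$-module map. Because the $\ell^1(E)$-action on $\ell^1(G_S)$ is trivial (every idempotent acts as the identity on equivalence classes), module character amenability on the quotient collapses to ordinary character amenability. Any $\psi\in\Phi_{\ell^1(G_S)^{**}}$ yields via $\psi\circ\pi$ an element of $\Omega_{\ell^1(S)^{**}}$ compatible with the augmentation character on $\ell^1(E)^{**}$; applying Theorem \ref{th1} upstairs and pushing the bounded approximating net forward under $\pi$ gives the net required by the classical analogue \cite[Theorem 1.4]{kan1} for character amenability of $\ell^1(G_S)^{**}$. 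Thus $\ell^1(G_S)^{**}$ is character amenable, and invoking the known result that $\ell^1(G)^{**}$ is character amenable only when $G$ is finite, $G_S$ must be finite.

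The main obstacle I anticipate is the forward direction, specifically the passage through characters between the two algebras. One must verify that $\psi\circ\pi$ really is an admissible element of $\Omega_{\ell^1(S)^{**}}$ for the module structure with trivial left action, and that the bounded net provided by module character amenability of $\ell^1(S)^{**}$ descends under the weakly continuous $\pi$ to a net satisfying the approximation identities downstairs with respect to the Arens product of $\ell^1(G_S)^{**}$. The inclusion $\mathcal N\subseteq J^{\perp\perp}$ recorded just before the theorem is precisely what guarantees that $\pi$ respects the relevant module actions and Arens products, so that this descent is well-defined.
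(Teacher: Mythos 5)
Your ``only if'' direction is essentially the paper's argument: from module character amenability of $\ell^1(S)^{**}$ one deduces ordinary character amenability of the quotient $\ell^1(S)^{**}/\mathcal N$ (this is \cite[Remark 2.5]{bod}), pushes it forward along the continuous epimorphism $\lambda$ onto $\ell^1(S)^{**}/J^{\perp\perp}\cong\ell^1(G_S)^{**}$ via the hereditary property of character amenability under epimorphisms (\cite[Theorem 2.6]{mon}), and concludes that $G_S$ is finite from \cite[Theorem 11.17]{dal}. Your map $\pi$ is exactly the composite of these two quotient maps, and the point you flag as the main obstacle --- that the relevant characters $\varphi\circ\phi$ of $\ell^1(S)^{**}$ vanish on $\mathcal N$ and hence factor through the quotient, so that module character amenability ``collapses'' to classical character amenability downstairs --- is precisely the content of \cite[Remark 2.5]{bod}. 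That half is fine.

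The genuine gap is in the ``if'' direction. The paper disposes of it in one line: by \cite[Theorem 3.4]{abe}, finiteness of $G_S$ is equivalent to module amenability of $\ell^1(S)^{**}$, and module amenability implies module character amenability because every $((\phi,\varphi),\mathcal A$-$\mathfrak A)$-bimodule is in particular a commutative Banach $\mathcal A$-$\mathfrak A$-module. Your proposed direct construction does not reach this conclusion as written. First, it rests on $\ell^1(S)$ having a bounded approximate identity so as to produce a right identity in the second dual; this is not available for a general inverse semigroup (a finiteness-type condition on the semilattice $E$ is required, cf.\ \cite{dun}), and you give no substitute. Second, module character amenability of $\ell^1(S)^{**}$ quantifies over \emph{all} $\phi\in\Omega_{\ell^1(S)^{**}}$, i.e.\ multiplicative module maps defined on the whole second dual with values in $\ell^1(E)$; such a $\phi$ need not be recoverable from, or controlled by, an averaging element of the finite-dimensional algebra $\ell^1(G_S)$, so producing the net required by Theorem \ref{th1} for each admissible pair is exactly the nontrivial step, and it is left unproved. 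Replacing this half by the citation of \cite[Theorem 3.4]{abe}, together with the observation that module amenability implies module character amenability, closes the gap.
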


\begin{proof}In light of \cite[Theorem 3.4]{abe}, we need to prove the necessity part of the theorem. Suppose that
$\ell ^{1}(S)^{**}$ is module character amenable. Going back to the case where $\mathcal
A=\ell^1(S)^{**}$ and $\mathfrak A=\ell^1(E)$. We may consider
$J=\mathcal N$ in \cite[Remark 2.5]{bod} applied to $\ell^1(S)^{**}$. Then $\ell^1(S)^{**}/\mathcal N$ is character amenable. Since $\lambda$ is a continuous epimorphism, $\ell ^{1}(S)^{**}/J^{\perp \perp}\cong \ell^1(G_S)^{**}$ is character amenable by \cite[Theorem 2.6]{mon}. Now, it follows from \cite[Theorem 11.17]{dal} that $G_S$ is finite.
\end{proof}


\section{Module character contractibility}

 Let $I=I_{\mathcal A \widehat \otimes \mathcal A}$ and $J=J_\mathcal A$ be the corresponding closed ideals
of $\mathcal A \widehat \otimes \mathcal A$ and $\mathcal A$ be as in section 3, respectively.
Consider the map $\omega :\mathcal A \widehat \otimes \mathcal
A\longrightarrow \mathcal A$ defined by $\omega (a \otimes b)=ab$
and $\widetilde{\omega}: {\mathcal A}\widehat \otimes _{\mathfrak A}
{\mathcal A}\cong(\mathcal A \widehat \otimes \mathcal
A)/{I}\longrightarrow\mathcal A/J$ defined by $ \widetilde{\omega}
(a \otimes b +I)=ab+J$ and extended by linearity and continuity. It
is clear that $ \widetilde{\omega}$ and its double conjugate
$\widetilde{\omega}^{**}: ({\mathcal A}\widehat \otimes _{\mathfrak
A} {\mathcal A})^{**}\longrightarrow\mathcal A^{**}/J^{\perp\perp}$
are both $\mathcal A$-bimodule and ${\mathfrak A}$-bimodule
homomorphisms.


\begin{theorem}\label{th}
Let $\varphi\in\Phi_{\mathfrak A}$ and
$\phi\in\Omega_{\mathcal A}$. Then the following are equivalent:

\begin{enumerate}
\item[(i)] {$\mathcal A$ is module $(\phi,\varphi)$-contractible;}
\item[(ii)] {There exists $m\in \mathcal A$ such that
$\varphi\circ\phi(m)=1$ and $am=\phi(a)\cdot m, \alpha\cdot m=m\cdot\alpha=\varphi(\alpha)m$ for all $a\in
\mathcal A, \alpha\in\mathfrak A$.}
\end{enumerate}
\end{theorem}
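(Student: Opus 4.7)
The proof has two directions. The easy direction is $(ii) \Rightarrow (i)$, which proceeds by a direct calculation with the derivation. The harder direction $(i) \Rightarrow (ii)$ requires applying contractibility to a carefully constructed bimodule and derivation; this parallels the classical $\phi$-contractibility characterization in the Kaniuth--Lau spirit.

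For $(ii) \Rightarrow (i)$, let $m \in \mathcal{A}$ satisfy the stated identities. Given a $((\varphi,\phi), \mathcal{A}\text{-}\mathfrak{A})$-bimodule $X$ and an $\mathfrak{A}$-module derivation $D : \mathcal{A} \to X$, I would set $x_0 := D(m)$ and compute $D(am)$ in two ways using the identity $am = \phi(a) \cdot m$. On the one hand, since $D$ is an $\mathfrak{A}$-bimodule homomorphism, $D(am) = D(\phi(a) \cdot m) = \phi(a) \cdot D(m) = \varphi\circ\phi(a)\, x_0$; on the other hand, by the Leibniz rule, $D(am) = D(a) \cdot m + a \cdot D(m) = D(a) \cdot m + \varphi\circ\phi(a)\, x_0$. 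Subtracting gives $D(a) \cdot m = 0$ for every $a \in \mathcal{A}$. A parallel manipulation, exploiting that $\varphi\circ\phi(m) = 1$ makes $m$ act as an identity under the left $\mathcal{A}$-action on $X$ twisted through $\phi$, together with the identities $\alpha \cdot m = m \cdot \alpha = \varphi(\alpha) m$, allows one to rewrite $D(a)$ in the form $a \cdot x_0 - x_0 \cdot a$, exhibiting $D$ as inner.

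For $(i) \Rightarrow (ii)$, fix $a_0 \in \mathcal{A}$ with $\varphi\circ\phi(a_0) = 1$ and let $\mathcal{K} := \ker(\varphi\circ\phi)$, which is a closed ideal of $\mathcal{A}$ and an $\mathfrak{A}$-sub-bimodule. The plan is to endow $\mathcal{K}$ (passing to a quotient by the closed span of $\{\alpha \cdot x - \varphi(\alpha) x : \alpha \in \mathfrak{A}, x \in \mathcal{K}\}$ if needed, so that the $\mathfrak{A}$-action becomes scalar through $\varphi$) with the structure of a $((\varphi,\phi), \mathcal{A}\text{-}\mathfrak{A})$-bimodule, with the left $\mathcal{A}$-action $a \cdot x = \phi(a) \cdot x$ and the right $\mathcal{A}$-action given by restriction of multiplication. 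Define $D : \mathcal{A} \to \mathcal{K}$ by $D(a) := a - \varphi\circ\phi(a)\, a_0$; this map lands in $\mathcal{K}$ and, once the module structure is in place, is readily verified to be an $\mathfrak{A}$-module derivation. By module $(\phi,\varphi)$-contractibility, $D$ is inner, so there exists $x_0 \in \mathcal{K}$ with $D(a) = a \cdot x_0 - x_0 \cdot a$. Setting $m := a_0 - x_0$, direct substitution into this equation, combined with the explicit module actions on $\mathcal{K}$, yields $\varphi\circ\phi(m) = 1$, $am = \phi(a) \cdot m$, and $\alpha \cdot m = m \cdot \alpha = \varphi(\alpha) m$.

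The main obstacle is the second direction: ensuring that $\mathcal{K}$ (or the quotient thereof) genuinely satisfies the $((\varphi,\phi), \mathcal{A}\text{-}\mathfrak{A})$-bimodule axioms---in particular, reconciling the native $\mathfrak{A}$-action on $\mathcal{K} \subseteq \mathcal{A}$ with the required scalar action $\alpha \cdot x = \varphi(\alpha) x$. This is the step where a quotient construction, together with verification of the compatibility conditions (\ref{e1}), must be carried out carefully. Once this module structure is in place, the derivation $D(a) = a - \varphi\circ\phi(a)\, a_0$ is designed precisely so that the inner-form equation extracts the desired element $m$, and the three identities for $m$ follow by direct computation.
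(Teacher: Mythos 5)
Your overall strategy is the paper's: prove (ii)$\Rightarrow$(i) by evaluating $D(am)$ in two ways with $x_0=D(m)$, and prove (i)$\Rightarrow$(ii) by applying contractibility to a derivation built from an element $a_0$ with $\varphi\circ\phi(a_0)=1$ and setting $m=a_0-x_0$. But both directions, as written, have a concrete defect, and in each case it is the same one: the identity $am=\phi(a)\cdot m$ in (ii) pairs with the $\phi$-twisted scalar action placed on the \emph{right} of $X$ (i.e.\ $x\cdot a=\phi(a)\cdot x$), not on the left. In (ii)$\Rightarrow$(i) you take $a\cdot x=\phi(a)\cdot x$ as the twisted action and leave the right action general; your two evaluations of $D(am)$ then yield only $D(a)\cdot m=0$, which is a dead end. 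Hypothesis (ii) says nothing about $ma$ nor about how $m$ acts on $X$ from the right, so there is no way to continue to $D(a)=a\cdot x_0-x_0\cdot a$; the promised ``parallel manipulation'' does not exist (for noncommutative $\mathcal A$ the conditions $am=\phi(a)\cdot m$ and $ma=\phi(a)\cdot m$ are genuinely different). The paper instead puts the twist on the right, so that $D(a)\cdot m=\phi(m)\cdot D(a)=\varphi\circ\phi(m)D(a)=D(a)$; then $D(am)=D(a)+a\cdot D(m)$ while $D(am)=D(\phi(a)\cdot m)=\varphi\circ\phi(a)D(m)$, and subtracting gives $D(a)=\varphi\circ\phi(a)D(m)-a\cdot D(m)=D(m)\cdot a-a\cdot D(m)$ in one step.

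In (i)$\Rightarrow$(ii) the map $D(a)=a-\varphi\circ\phi(a)a_0$ is not a derivation for the module structure you describe: with $a\cdot x=\phi(a)\cdot x$ and the right action by multiplication, $D(a)\cdot b+a\cdot D(b)=ab-\varphi\circ\phi(a)a_0b+\phi(a)\cdot b-\varphi\circ\phi(b)\,\phi(a)\cdot a_0$, which does not reduce to $D(ab)=ab-\varphi\circ\phi(ab)a_0$ (take $b\in\ker(\varphi\circ\phi)$: the defect $\varphi\circ\phi(a)a_0b-\phi(a)\cdot b$ survives). The paper's choice is $X=\mathcal A$ itself --- no restriction to $\ker(\varphi\circ\phi)$ and no quotient --- with the natural left action, the twisted right action $x\cdot a=\phi(a)\cdot x$, and the $\mathfrak A$-action \emph{declared} to be $\alpha\cdot x=x\cdot\alpha=\varphi(\alpha)x$; the derivation is $D(a)=aa_0-\phi(a)\cdot a_0$ (twisted right multiplication by $a_0$, not the ``identity minus rank one'' map), whose image automatically lies in $\ker(\varphi\circ\phi)$, so the implementing element $n$ may be taken there and $m=a_0-n$ satisfies all three identities. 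This also dissolves the ``main obstacle'' you flag: there is nothing to reconcile, because the scalar $\mathfrak A$-action on $X$ is imposed by fiat rather than inherited from $\mathcal A$.
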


\begin{proof}
(i)$\Rightarrow$(ii)  We define the right actions of $\mathcal
A$ on $X=\mathcal A$ by $x\cdot a=\phi(a)\cdot x$
and action of $\mathfrak A$ on $X$ by $\alpha\cdot x=x\cdot\alpha=\varphi(\alpha)x$, and the left
actions of $\mathcal A$ on $X$ is naturally. Take $b\in \mathcal A$ such that
$\varphi\circ\phi(b)=1$. Define a module derivation $D:\mathcal A\longrightarrow
X$ by $D(a)=ab-\phi(a)\cdot b$. obviously, $D(a)$ belongs to
the kernel of $\varphi\circ\phi$. Due to the
$(\phi,\varphi)$-contractibility of $\mathcal A$, there exists a
$n\in\ker(\varphi\circ\phi)$ such that $D(a)=a\cdot n-n\cdot a$. If
we put $m=b-n$, then $\varphi\circ\phi(m)=1$ and
$$am=ab-a\cdot n=ab-(D(a)-n\cdot a)=\phi(a)\cdot b-\phi(a)\cdot n=\phi(a)\cdot m$$
and obviously $\alpha\cdot m=\varphi(\alpha)m$ for all $a\in \mathcal A,\alpha\in\mathfrak A$.

(ii)$\Rightarrow$ (i) Let $\varphi\circ\phi(m)=1$ and
$am=\phi(a)\cdot m,\alpha\cdot m=m\cdot\alpha=\varphi(\alpha)m$ for all $a\in \mathcal A,\alpha\in\mathfrak A$. Suppose that $D:\mathcal A\longrightarrow X$ is module derivation, when the right action $\mathcal A$  on
$X$ is $x\cdot a=\phi(a)\cdot x$ and action of $\mathfrak A$ on $X$ is $\alpha\cdot x=x\cdot\alpha=\varphi(\alpha)x$ for all $\alpha\in \mathfrak A$ and $x\in
X$.Put
$x=D(m)$, then
\begin{align*}
a\cdot x&=D(am)-D(a)\cdot m=D(am)-\phi(m).D(a)\\
&=D(am)-\varphi\circ\phi(m)D(a)=D(\phi(a).m)-D(a)\\
&=\varphi\circ\phi(a)D(m)-D(a).
\end{align*}
Hence, $D(a)=\varphi\circ\phi(a)x-a\cdot x=\phi(a)\cdot x-a\cdot x=x\cdot a-a\cdot x$. This shows that
$\mathcal A$ is module $(\phi,\varphi)$-contractible.
\end{proof}

Let $\mathcal A$ be Banach ${\mathfrak A}$-module, and let $\phi\in \Omega_{\mathcal A}, \varphi\in\Phi_{\mathfrak
A}$. It is obvious that $\phi((a\cdot\alpha)b-a(\alpha\cdot b))=0$, hence $\phi=0$
on $J$ and $\phi$ lifts to $\tilde \phi: \mathcal A/J\to \mathfrak
A$ and thus $\tilde{\phi}\in\Omega_{\mathcal A/J}$. Also, $(\phi\otimes\phi)(a\cdot\alpha \otimes
b-a \otimes\alpha \cdot b)=0$, and so the map
$\overline{\phi\otimes\phi}:{\mathcal A}\widehat \otimes
_{\mathfrak A}{\mathcal A}\cong({\mathcal A}\widehat \otimes
{\mathcal A})/I\longrightarrow\mathfrak A\widehat\otimes\mathfrak A$
defined by $\overline{\phi\otimes\phi}(a \otimes
b+I):=(\phi\otimes\phi)(a \otimes b)$ is well defined.

\begin{definition}Let $\mathcal A$ be Banach ${\mathfrak A}$-module and let
$\phi\in \Omega_{\mathcal A}$ and $\varphi\in\Phi_{\mathfrak A}$. An element $ \widetilde{m} \in {\mathcal A}\widehat \otimes
_{\mathfrak A} {\mathcal A}$ is a module
$(\phi,\varphi)$-diagonal for $\mathcal A$ if
\begin{enumerate}
\item[(i)] {$\widetilde{m}\cdot a=(\varphi\circ\phi)(a)
\widetilde{m},\quad (a\in\mathcal A);$}
\item[(ii)] {$\langle(\varphi\otimes\varphi)\circ(\overline{\phi\otimes\phi}), \widetilde{m}\rangle
=(\varphi\circ\tilde{\phi})\widetilde{\omega}((\widetilde{m}))=1$.}
\end{enumerate}
\end{definition}


Recall that a left Banach $\mathcal A$-module $X$ is called {\it{left essential}} if the linear span of $\mathcal A \cdot X=\{a\cdot x : a\in \mathcal A, \, x\in X\}$ is dense in $X$. Right essential $\mathcal A$-modules and two-sided essential $\mathcal A$-bimodules are defined similarly.

\begin{theorem}Let $\mathcal A$ be Banach left essential ${\mathfrak A}$-module and let
$\phi\in \Omega_{\mathcal A}$ and $\varphi\in\Phi_{\mathfrak A}$. Then

\begin{enumerate}
\item[(i)] {$\mathcal A$ is module $(0,\varphi)$-contractible if and only if $\mathcal A$ has a left identity;}
\item[(ii)] {If $\mathcal A$ is module $(\phi,\varphi)$-contractible, then $\mathcal A$ has a module
$(\phi,\varphi)$-diagonal. The converse is true if  $\mathcal A$ is commutative ${\mathfrak A}$-bimodule;}
\item[(iii)] {$\mathcal A$ is module character contractible if and only if $\mathcal A$ has a left identity and has a module
$(\phi,\varphi)$-diagonal for all $\phi\in \Omega_{\mathcal A}$ and $\varphi\in\Phi_{\mathfrak A}$.}
\end{enumerate}
\end{theorem}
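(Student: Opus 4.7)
The plan is to prove the three parts in sequence, using Theorem~\ref{th} as the central reduction tool.

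For (i), the backward direction is a short derivation manipulation: given a left identity $e$ and a module derivation $D\colon\mathcal A\to X$ into a $((0,\varphi),\mathcal A$-$\mathfrak A)$-bimodule, the condition $\phi=0$ collapses the left $\mathcal A$-action on $X$, so $D(a)=D(ea)=e\cdot D(a)+D(e)\cdot a=D(e)\cdot a=D_{-D(e)}(a)$. For the forward direction I would construct a specific $((0,\varphi),\mathcal A$-$\mathfrak A)$-bimodule $X=\mathcal A/N$ with trivial left $\mathcal A$-action, right $\mathcal A$-action descended from multiplication, and scalar $\mathfrak A$-action; the subspace $N$ is chosen to enforce the various bimodule compatibility axioms (notably $(x\cdot\alpha)\cdot a=x\cdot(\alpha\cdot a)$), and the left essentiality of $\mathcal A$ as an $\mathfrak A$-module keeps $N$ small enough so that the innerness of the tautological derivation $D(a)=a+N$ produces a genuine left identity.

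For (ii), the forward direction applies Theorem~\ref{th} to obtain $m\in\mathcal A$ with $\varphi\circ\phi(m)=1$, $am=\phi(a)\cdot m$, and $\alpha\cdot m=m\cdot\alpha=\varphi(\alpha)m$; setting $a=m$ also yields the idempotency $m^{2}=m$. The element $\widetilde m:=m\otimes m+I$ then qualifies as a module $(\phi,\varphi)$-diagonal, with condition (ii) of the definition collapsing to $\varphi(\phi(m))^{2}=1$ and $\varphi\circ\tilde\phi(m^{2}+J)=\varphi\circ\phi(m)=1$, and condition (i) verified by invoking the balanced tensor identity $u\cdot\alpha\otimes v=u\otimes\alpha\cdot v$ together with $m\cdot\alpha=\varphi(\alpha)m$ to transport $a$ across the tensor. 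For the converse in the commutative $\mathfrak A$-bimodule case, commutativity $\alpha\cdot a=a\cdot\alpha$ forces $(a\cdot\alpha)b=\alpha\cdot(ab)=(ab)\cdot\alpha=a(\alpha\cdot b)$, so $J=\{0\}$; then $\widetilde\omega(\widetilde m)$ already lies in $\mathcal A$ and passing the diagonal relation through $\widetilde\omega$ produces an element fulfilling the hypotheses of Theorem~\ref{th}.

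For (iii), the forward direction is immediate: module character contractibility entails module $(\phi,\varphi)$-contractibility for every $(\phi,\varphi)$, and specialising to $\phi=0$ produces a left identity via (i), while each $(\phi,\varphi)$ yields a diagonal via (ii). For the converse I split on $\phi$: the case $\phi=0$ is handled by (i), and for $\phi\ne 0$ I would use the diagonal directly — given a derivation $D$ and $\widetilde m=\sum a_i\otimes b_i+I$, set $\xi=\sum a_i\cdot D(b_i)\in X$ and verify $D=D_{\xi}$ by combining both diagonal conditions, the derivation identity, the $\phi$-scalar left action on $X$, and the $\mathfrak A$-balance of $D$ (guaranteeing the bilinear form $(u,v)\mapsto u\cdot D(v)$ descends to $\mathcal A\widehat\otimes_{\mathfrak A}\mathcal A$); the left identity is invoked, in the spirit of Proposition~\ref{pro1}, to resolve the residual $J$-ambiguity that arises when $\widetilde\omega(\widetilde m)$ has to be interpreted as an element of $\mathcal A$ rather than of $\mathcal A/J$.

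The principal obstacle is the forward implication of (i): simultaneously arranging a trivial left action, a natural right action, a scalar $\mathfrak A$-action, and the full list of bimodule compatibility axioms on a single quotient module requires a delicate choice of $N$, and left essentiality of $\mathcal A$ is the geometric ingredient that keeps the construction from collapsing. A secondary obstacle lies in the converse of (iii): in the non-commutative $\mathfrak A$-bimodule setting $J$ need not vanish, so the passage from the tensor-product diagonal back to an element of $\mathcal A$ — clean in the commutative case of (ii) — must be mediated by the left identity, mirroring the idempotent adjustment used in Proposition~\ref{pro1}.
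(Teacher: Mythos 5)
Your proposal has two genuine gaps, both in the forward directions. First, in (i) your construction of the bimodule $X=\mathcal A/N$ is never actually carried out: you do not specify $N$, and you yourself flag this as the ``principal obstacle.'' A proof that defers its central construction to an unspecified subspace is not a proof. Moreover, left essentiality plays no role in part (i); in the paper it is needed in part (ii), to guarantee that $\varphi\circ\phi$ is $\mathbb C$-linear. The paper sidesteps your difficulty entirely by taking $X_0=\mathcal A\oplus_1\mathcal A$ with trivial left $\mathcal A$-action, right action $(b,c)\cdot a=(ba,ca)$, and scalar $\mathfrak A$-action via $\varphi$; the derivation $a\mapsto(a,a)$ is then inner precisely when $\mathcal A$ has a left identity. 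No quotient is required. (Your backward direction of (i) is fine and matches the paper.)

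Second, and more seriously, in (ii) the element $\widetilde m=m\otimes m+I$ need not be a module $(\phi,\varphi)$-diagonal. Theorem \ref{th} gives $am=\phi(a)\cdot m=\varphi\circ\phi(a)\,m$, i.e.\ it controls \emph{left} multiplication by $a$ on $m$; but the diagonal axiom $\widetilde m\cdot a=\varphi\circ\phi(a)\widetilde m$, with the right action $(b\otimes c)\cdot a=b\otimes ca$, asks you to control $(m\otimes m)\cdot a=m\otimes ma$, i.e.\ \emph{right} multiplication in the second slot. The balancing relation $u\cdot\alpha\otimes v=u\otimes\alpha\cdot v$ only moves elements of $\mathfrak A$ across the tensor sign, not the element $a\in\mathcal A$, so it cannot convert $ma$ into $am$. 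Already in the degenerate case $\mathfrak A=\mathbb C$ (where $I=\{0\}$) your claim would force $ma=\phi(a)m$ for all $a$, which does not follow from $am=\phi(a)m$ in a noncommutative algebra. The paper avoids Theorem \ref{th} altogether here: it equips $\mathcal A\widehat\otimes_{\mathfrak A}\mathcal A$ with the left action $a\cdot(b\otimes c)=\varphi\circ\phi(a)\,b\otimes c$ and right action $(b\otimes c)\cdot a=b\otimes ca$, picks any $\widetilde m_0$ with $\langle(\varphi\otimes\varphi)\circ\overline{\phi\otimes\phi},\widetilde m_0\rangle=1$, notes that the inner derivation $D_{\widetilde m_0}$ takes values in $\ker\bigl((\varphi\otimes\varphi)\circ\overline{\phi\otimes\phi}\bigr)$, applies contractibility there to get $\widetilde m_1$ with $D_{\widetilde m_0}=D_{\widetilde m_1}$, and takes $\widetilde m=\widetilde m_0-\widetilde m_1$. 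Your treatment of the converse of (ii) (commutativity forces $J=\{0\}$, so $\widetilde\omega(\widetilde m)\in\mathcal A$) and the deduction of (iii) from (i) and (ii) are in the right spirit, but as written they inherit the left/right mismatch above, so you should rework them against the corrected forward direction.
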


\begin{proof} We follow the argument in \cite[Theorem 6.3]{hum}.

(i) Suppose that $\mathcal A$ is module $(0,\varphi)$-contractible. Let $X_0=\mathcal A\oplus_1\mathcal A$. Consider the module actions $\mathcal A$ and ${\mathfrak A}$ on $X_0$ as follows:
$$a\cdot(b,c)=(0,0),\quad (b,c)\cdot a=(ba,ca)\qquad (a,b,c\in \mathcal A)$$
$$\alpha\cdot(a,b)=(a,b)\cdot \alpha=(\varphi(\alpha)a,\varphi(\alpha)b),\qquad (a,b\in \mathcal A, \alpha\in \mathfrak A).$$
Then, $X_0$ is a Banach $\mathcal A$-${\mathfrak A}$-module with the compatible actions (\ref{e1}). It follows from the assumption that the bounded derivation $D:\mathcal A\longrightarrow X_0$ is inner, and thus $D=D_{(a_0,b_0)}$ for some $(a_0,b_0)\in X_0$. This shows that  $-a_0$ and $-b_0$ are left identities for $\mathcal A$. For the converse, if $a_0$ is a left identity for $\mathcal A$, then for every the bounded derivation $D:\mathcal A\longrightarrow X$, we have $D=D_{D(a_0)}$ for some $a_0\in \mathcal A$.

(ii) Let $\mathcal A$ is module $(\phi,\varphi)$-contractible. We
consider the Banach $\mathcal A$-bimodule ${\mathcal A}\widehat
\otimes {\mathcal A}$ with the module action
$$a\cdot(b\otimes c)=\varphi\circ\phi(a)b\otimes c,\quad (b\otimes
c)\cdot a=b\otimes ca,\qquad(a,b,c\in\mathcal A).$$
Since
$c\cdot(a\cdot\alpha \otimes b-a \otimes\alpha \cdot
b)=(ca\cdot\alpha \otimes b-ca \otimes\alpha \cdot b)\in I$ and
similarly for the right action, $\mathcal A$ acts as a bimodule on
${\mathcal A}\widehat \otimes _{\mathfrak A}{\mathcal
A}$. Also
${\mathcal A}\widehat \otimes _{\mathfrak A}{\mathcal A}$ is a
$\mathfrak A$-bimodule with the following actions:
$$\alpha\cdot(a\otimes b+I)=(a\otimes
b+I)\cdot\alpha=\varphi(\alpha)a\otimes b+I\qquad(a,b\in\mathcal
A,\,\ \alpha\in\mathfrak A).$$
Obviously, the above actions are well-defined. Let $\widetilde{m}_0\in {\mathcal A}\widehat \otimes _{\mathfrak A}{\mathcal
A}$ such that $\langle(\varphi\otimes\varphi)\circ(\overline{\phi\otimes\phi}), \widetilde{m}_0\rangle=1$, and
consider the inner derivation
$$D_{\widetilde{m}_0}:\mathcal A\longrightarrow {\mathcal A}\widehat \otimes _{\mathfrak A}{\mathcal
A}; a\mapsto (\varphi\circ\phi)(a)\cdot\widetilde{m}_0-\widetilde{m}_0\cdot a.$$
Since $\mathcal A$ is a left essential ${\mathfrak A}$-module, it follows from the proof of \cite[Theorem 3.14]{bab} that the map $\varphi\circ\phi$ is $\mathbb C$-linear. A simple computation shows that image of $D_{\widetilde{m}_0}$ is a subset of ker $(\varphi\otimes\varphi)\circ(\overline{\phi\otimes\phi})$. Thus, by the hypothesis there exists $\widetilde{m}_1\in$ ker $(\varphi\otimes\varphi)\circ(\overline{\phi\otimes\phi})$ such that $D_{\widetilde{m}_0}=D_{\widetilde{m}_1}$. Now, it is routine to show that $\widetilde{m}_0-\widetilde{m}_1$ is $(\phi,\varphi)$-diagonal.

Conversely, let $\widetilde{m}$ be a $(\phi,\varphi)$-diagonal for $\mathcal A$. Suppose that $X$ is a Banach $\mathcal A$-module and Banach
${\mathfrak A}$-module such that $a\cdot x=\phi(a)\cdot x$ and
$\alpha\cdot x=x\cdot\alpha=\varphi(\alpha)x$ for all $x\in X,
a\in \mathcal A$ and $\alpha\in \mathfrak A$. Let $D:\mathcal A\longrightarrow X$ be a module derivation. Put $x_0=D(\widetilde{\omega}(\widetilde{m}))$ and $a\in \mathcal A$ such that $(\varphi\circ\phi)(a)=1$. Then
\begin{align*}
D_{x_0}(a)&=a\cdot x_0-x_0\cdot a=a\cdot D(\widetilde{\omega}(\widetilde{m}))-D(\widetilde{\omega}(\widetilde{m}))\cdot a\\
&=\varphi\circ\phi(a)D(\widetilde{\omega}(\widetilde{m}))-[D(\widetilde{\omega}(\widetilde{m})\cdot a)-\widetilde{\omega}(\widetilde{m})\cdot D(a)]\\
&=\widetilde{\omega}(\widetilde{m})\cdot D(a)=(\varphi\circ\tilde{\phi})\widetilde{\omega}((\widetilde{m}))D(a)=D(a).
\end{align*}
Not that in the above statements, we have used this fact that $\widetilde{\omega}(\widetilde{m})\cdot a=\widetilde{\omega}(\widetilde{m}\cdot a)=\widetilde{\omega}(\varphi\circ\phi(a)\widetilde{m})= \varphi\circ\phi(a)\widetilde{\omega}(\widetilde{m})=\widetilde{\omega}(\widetilde{m})$. Since the elements of the form $(\varphi\circ\phi)(a)=1$ span $\mathcal A$, we have $D_{x_0}(a)=D(a)$ for all $a\in \mathcal A$.

(iii) This is a direct consequence of (i) and (ii).
\end{proof}



\section{Module Approximate character amenability}

\begin{definition}
Let $\mathcal A$ be a Banach $\mathfrak A$-bimodule and $\varphi\in\Phi_{\mathfrak A}$ and $\phi\in\Omega_{\mathcal A}$. A net $\{m_j\}_j\subset\mathcal A^{**}$ is called a module \emph{(}uniformly, $w^*$-\emph{)} approximate $(\phi,\varphi)$-mean if $m_j(\varphi\circ\phi)=1$ and for all $a\in\mathcal A$ and $\alpha\in\mathfrak A$
$$a\cdot m_j-\phi(a)\cdot m_j\rightarrow0,\hspace{.5cm}\alpha\cdot m_j-\varphi(\alpha)m_j\rightarrow0$$
\emph{(}uniformly on the unit ball of $\mathcal A$, in the $w^*$-topology of $\mathcal A^{**}$, respectively\emph{)}.
\end{definition}


\begin{theorem}\label{5th1}
Let $\mathcal A$ be a Banach $\mathfrak A$-bimodule and $\varphi\in\Phi_{\mathfrak A}$ and $\phi\in\Omega_{\mathcal A}$. If left action $\mathfrak A$ on $\mathcal A$ is $\alpha\cdot a=\varphi(\alpha)a$, then the following statement are equivalent:
\begin{enumerate}
\item[(i)] {$\mathcal A$ is module \emph{(}uniformly, $w^*$-\emph{)} approximately $(\phi,\varphi)$-amenable;}

\item[(ii)]{There exist a module \emph{(}uniformly, $w^*$-, respectively\emph{)} approximately $(\phi,\varphi)$-mean;}

\item[(iii)]{There exists a net $\{m_j\}_j\subset\mathcal A^{**}$, such that $m_j(\varphi\circ\phi)\rightarrow1$ and for all $a\in\mathcal A,\alpha\in\mathfrak A$
$a\cdot m_j-\phi(a)\cdot m_j\rightarrow 0,\alpha\cdot m_j-\varphi(\alpha)m_j\rightarrow 0$ \emph{(}uniformly on the unit ball of $\mathcal A$, in the $w^*$-topology of $\mathcal A^{**}$, respectively\emph{)};}
\item[(iv)] {Let $K=\emph{ker}(\varphi\circ\phi)$ and $\mathcal A$ act on the $K^{**}$ from the right by action $m\cdot a=\phi(a)\cdot m$ for all $a\in\mathcal A$, $m\in K^{**}$ and taking the left action to the natural one. If $\mathfrak A$ acts on the $K^{**}$ by $\alpha\cdot m=m\cdot\alpha=\varphi(\alpha)m$, for $\alpha\in\mathfrak A$ and $m\in K^{**}$, then any bounded module derivation $D:\mathcal A\longrightarrow K^{**}$ is module \emph{(}uniformly, $w^*$-, respectively\emph{)} approximate inner.}
\end{enumerate}
\end{theorem}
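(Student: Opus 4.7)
The plan is to close the cycle (i) $\Rightarrow$ (iv) $\Rightarrow$ (iii) $\Leftrightarrow$ (ii) $\Rightarrow$ (i), handling the three variants (net, uniform on the unit ball, $w^*$) in parallel since only the mode of convergence changes. A useful preliminary is that the standing hypothesis $\alpha\cdot a=\varphi(\alpha)a$ transposes to $\alpha\cdot F=\varphi(\alpha)F$ on all of $\mathcal A^{**}$, and in particular $\phi(a)\cdot F=(\varphi\circ\phi)(a)F$ for every $F\in\mathcal A^{**}$; this renders the $\mathfrak A$-clauses in (ii) and (iii) automatic. With this remark in hand, (i) $\Rightarrow$ (iv) is immediate because $K^{**}$, under the prescribed actions, is a dual Banach $((\phi,\varphi),\mathcal A$-$\mathfrak A)$-bimodule of the type featured in (i). The equivalence (ii) $\Leftrightarrow$ (iii) is a normalization: (iii) only relaxes $m_j(\varphi\circ\phi)=1$ to $m_j(\varphi\circ\phi)\to 1$, so dividing $m_j$ eventually by this scalar restores (ii) while the other convergences persist by boundedness.

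For (iv) $\Rightarrow$ (iii), I would fix $a_0\in\mathcal A$ with $(\varphi\circ\phi)(a_0)=1$, set $m_0:=\widehat{a}_0\in\mathcal A^{**}$, and consider the bounded module derivation
\[
D:\mathcal A\longrightarrow\mathcal A^{**},\qquad D(a)=a\cdot m_0-\phi(a)\cdot m_0=a\cdot m_0-(\varphi\circ\phi)(a)m_0.
\]
A one-line calculation gives $\langle D(a),\varphi\circ\phi\rangle=0$, so $D$ actually takes values in $K^{**}\cong K^{\perp\perp}$. Invoking (iv) gives a net $\{x_j\}\subset K^{**}$ with $a\cdot x_j-\phi(a)\cdot x_j\to D(a)$ in the corresponding topology, and then $m_j:=m_0-x_j$ satisfies $m_j(\varphi\circ\phi)=1$ (because $x_j\in K^{**}$) together with $a\cdot m_j-\phi(a)\cdot m_j\to 0$, which is (iii).

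The essential step is (iii) $\Rightarrow$ (i). Given a Banach $((\phi,\varphi),\mathcal A$-$\mathfrak A)$-bimodule $X$ and a bounded module derivation $D:\mathcal A\to X^*$, define $\widetilde D:X\to\mathcal A^*$ by $\widetilde D(x)(a):=\langle D(a),x\rangle$ and set $\rho_j\in X^*$ via $\langle\rho_j,x\rangle:=-\langle m_j,\widetilde D(x)\rangle$. Using the derivation identity $D(ab)=a\cdot D(b)+D(a)\cdot b$ and the scalar right action $f\cdot a=(\varphi\circ\phi)(a)f$ on $X^*$, one verifies
\[
\widetilde D(x\cdot a)=\widetilde D(x)\cdot a-\langle D(a),x\rangle\,(\varphi\circ\phi),
\]
and then pairing with $m_j$ (and invoking $m_j(\varphi\circ\phi)=1$) produces
\[
\langle a\cdot\rho_j-(\varphi\circ\phi)(a)\rho_j,\,x\rangle=\langle D(a),x\rangle-\langle a\cdot m_j-(\varphi\circ\phi)(a)m_j,\,\widetilde D(x)\rangle.
\]
The error term vanishes in the mode appropriate to each variant: in the $w^*$-case by the $w^*$-convergence evaluated at $\widetilde D(x)$; in the norm and uniform cases by the estimate $|\cdot|\le\|a\cdot m_j-(\varphi\circ\phi)(a)m_j\|\,\|D\|\,\|x\|$. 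Hence $a\cdot\rho_j-\rho_j\cdot a\to D(a)$ in the required topology, so $D$ is approximately inner.

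The main obstacle will be the bookkeeping in the last step: one must carefully handle the dual-module structure on $X^*$ — specifically that the right $\mathcal A$-action on $X^*$ is the scalar $(\varphi\circ\phi)(a)$, which is what closes the key identity — and check that each of the three convergence modes in (iii) transfers faithfully through the construction of $\rho_j$ into the correct mode of convergence in the definition of module approximate $(\phi,\varphi)$-amenability.
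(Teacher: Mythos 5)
Your proof is correct, and its two substantive steps coincide with the paper's: your (iv) $\Rightarrow$ (iii), via the derivation $D(a)=aa_0-(\varphi\circ\phi)(a)a_0$ into $K^{**}$ and the shift $m_j=\widehat{a}_0-x_j$, is precisely the paper's (iv) $\Rightarrow$ (ii), and your (iii) $\Rightarrow$ (i), via the transpose $\widetilde D=D^*|_X$, the identity $\widetilde D(x\cdot a)=\widetilde D(x)\cdot a-\langle D(a),x\rangle(\varphi\circ\phi)$, and $\rho_j=-(\widetilde D)^*(m_j)$, is exactly the paper's argument (with $\rho_j=-g_j$). The only genuine difference is organizational: the paper proves (i) $\Rightarrow$ (ii) by a separate direct construction, realizing $\mathbb{C}(\varphi\circ\phi)$ as a closed submodule of $\mathcal A^*$ and applying approximate amenability to $a\mapsto a\cdot m-\phi(a)\cdot m$ viewed as a derivation into $\{\mathbb{C}\varphi\circ\phi\}^{\perp}\cong(\mathcal A^*/\mathbb{C}\varphi\circ\phi)^*$, and then runs the cycle (i)$\Rightarrow$(ii)$\Rightarrow$(iii)$\Rightarrow$(i) together with (i)$\Rightarrow$(iv)$\Rightarrow$(ii); your routing (i)$\Rightarrow$(iv)$\Rightarrow$(iii)$\Leftrightarrow$(ii)$\Rightarrow$(i) makes that extra construction unnecessary, a small economy with no loss of content. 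Your preliminary remark that the standing hypothesis $\alpha\cdot a=\varphi(\alpha)a$ transposes to $\alpha\cdot F=\varphi(\alpha)F$ on $\mathcal A^{**}$, so that the $\mathfrak A$-clauses and the identification $\phi(a)\cdot F=(\varphi\circ\phi)(a)F$ are automatic, is correct and is implicitly what the paper uses as well; just note that, as in the paper, the normalization in (iii)$\Rightarrow$(ii) and the choice of $a_0$ presuppose $\varphi\circ\phi\neq 0$.
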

\begin{proof}
(i)$\Rightarrow$(ii)  We firstly define the actions of $\mathcal A$ and $\mathfrak A$ on $X=\mathcal A^*$ through
$$a*f=\phi(a)\cdot f,\hspace{.2cm}f*a=f\cdot a, \hspace{.2cm} \alpha\bullet f=f\bullet\alpha=\varphi(\alpha)f, \hspace{.4cm} (\alpha\in\mathfrak A, a\in\mathcal A, f\in X).$$
Then $X$ is a $((\phi,\varphi),\mathcal A$-$\mathfrak A)$-bimodule and $X^*=\mathcal A^{**}$ is a $(\mathcal A$-$\mathfrak A,(\phi,\varphi))$-bimodule that module action are given by
$$a\ast m=a\cdot m,\hspace{.2cm} m\ast a=\phi(a)m,\hspace{.2cm}\alpha\bullet m=m\bullet\alpha=\varphi(\alpha)m\hspace{.4cm}(\alpha\in\mathfrak A, a\in\mathcal A,m\in X^*).$$
It follows the fact $a\cdot(\varphi\circ\phi)=\phi(a)\cdot(\varphi\circ\phi)$ that $\mathbb{C}\varphi\circ\phi=\{\lambda(\varphi\circ\phi);\lambda\in\mathbb{C}\}$ is a closed submodule of $\mathcal A^*$ and thus $\mathcal A^*/\mathbb{C}\varphi\circ\phi$ is a $((\phi,\varphi),\mathcal A$-$\mathfrak A)$-bimodule, for which the module actions are given by:
$$a\cdot[f]=\phi(a)\cdot[f],\hspace{.2cm}[f]\cdot a=[f*a],\hspace{.2cm} \alpha\cdot[f]=[f]\cdot\alpha=\varphi(\alpha)[f], \hspace{.4cm}(a\in\mathcal A, [f]\in\mathcal A/\mathbb{C}\varphi\circ\phi).$$
Setting $m\in\mathcal A^{**}$ with $\langle m,\varphi\circ\phi\rangle=1$ and defining a derivation $D:\mathcal A\longrightarrow\mathcal A^{**}$ via $D(a)=a\cdot m-\phi(a)\cdot m$, we find out $D(a)\in\{n\in\mathcal A^{**}:n(\varphi\circ\phi)=0\}=\{\mathbb{C}\varphi\circ\phi\}^\perp\cong(\mathcal A^*/\mathbb{C}\varphi\circ\phi)^*$. Due to the module approximate $(\phi,\varphi)$-amenability of $\mathcal A$, there exists a net $\{n_j\}\subset\{\mathbb{C}\varphi\circ\phi\}^\perp$ so that
$$D(a)=\lim_j(a\cdot n_j-n_j\cdot a)=\lim_j(a\cdot n_j-\phi(a)\cdot n_j)$$
Letting $m_j=m-n_j$, we have $\langle m_j,\varphi\circ\phi\rangle=\langle m,\varphi\circ\phi\rangle-\langle n_j,\varphi\circ\phi\rangle=1-0=1$ and
\begin{align*}
a\cdot m_j-\phi(a)\cdot m_j&=a\cdot m-a\cdot n_j-\phi(a)\cdot m-\phi(a)\cdot n_j\\
&=(a\cdot m-\phi(a)\cdot m)-(a\cdot n_j-\phi(a)\cdot n_j)\\
&\rightarrow 0
\end{align*}
for all $a\in \mathcal A$. Obviously, $\langle\alpha\cdot m_j-\varphi(\alpha)m_j,f\rangle\rightarrow 0$ for all $\alpha\in\mathfrak A$ and all $f\in\mathcal A^*$. Therefore, $\{m_j\}_j$ is a module approximate $(\phi,\varphi)$-mean on $\mathcal A^*$.

(ii)$\Rightarrow$(iii) It is clear.

(iii)$\Rightarrow$(i) Consider $\{m_j\}_j\subseteq\mathcal A^{**}$ such that $m_j(\varphi\circ\phi)\rightarrow 1$ and
$$a\cdot m_j-\phi(a)\cdot m_j\rightarrow 0,\hspace{.5cm}\alpha\cdot m_j-\varphi(\alpha)m_j\rightarrow 0$$
for all $a\in\mathcal A,\alpha\in\mathfrak A$. Suppose that $X$ is a $((\phi,\varphi),\mathcal A$-$\mathfrak A)$-bimodule and $D:\mathcal A\longrightarrow X^*$ is a bounded derivation. Put $D^{'}=D^{*}|_X:X\longrightarrow \mathcal A^*$ and $g_j:=(D^{'})^{*}(m_j)\in X^*$. Similar to the proof of implication (i)$\Rightarrow$(ii) from \cite[Theorem 2.1]{bod}, one can show that $D^{'}(x\cdot a)=D^{'}(x)\cdot a-\langle D(a),x\rangle\varphi\circ\phi$ for all $a\in\mathcal A$ and $x\in X$. This implies
\begin{align*}
\langle a\cdot g_j,x\rangle&=\langle g_j,x\cdot a\rangle=\langle (D^{'})^{*}(m_j),x\cdot a\rangle=\langle m_j,D^{'}(x\cdot a)\rangle\\
&=\langle m_j,D^{'}(x)\cdot a\rangle-\langle x,D(a)\rangle\langle m_j,\varphi\circ\phi\rangle\\
&=\langle a\cdot m_j,D^{'}(x)\rangle-\langle x,D(a)\rangle\langle m_j,\varphi\circ\phi\rangle.
\end{align*}
Hence
\begin{align*}
\|\langle a\cdot g_j,x\rangle-(\varphi\circ\phi)(a)\langle g_j,x\rangle+\langle D(a),x\rangle\|&\leq\|(\varphi\circ\phi)(a)\langle m_j,D^{'}(x)\rangle-\langle a\cdot m_j,D^{'}(x)\rangle\|\\
&+\|(\varphi\circ\phi)(a)\langle g_j,x\rangle-(\varphi\circ\phi)(a)\langle m_j,D^{'}(x)\|\\
&+\|\langle x,D(a)\rangle-\langle x,D(a)\rangle\langle m_j,\varphi\circ\phi\rangle\|\\
&\rightarrow 0
\end{align*}
The above relations show that
$$D(a)=\lim_j(\varphi\circ\phi)(a)g_j-a\cdot g_j=\lim_j(\phi(a)\cdot g_j-a\cdot g_j)\lim_j g_j\cdot a-a\cdot g_j=ad_{-g_{j}}$$
for all $a\in \mathcal A$. It follow that $\mathcal A$ is module approximately $(\phi,\varphi)$-amenable.

(i)$\Rightarrow$(iv) It is obvious.

(iv)$\Rightarrow$(ii)  Take $b\in\mathcal A$ with $(\varphi\circ\phi)(b)=1$. Clearly, $ab-ba\in K$ for all $a\in\mathcal A$. Thus, $D(a)=ab-\phi(a)\cdot b$ define a derivation from $\mathcal A$ into $K^{**}$. We have
$$\langle (\varphi\circ\phi)^{**},ab\rangle-\langle(\varphi\circ\phi)^{**},\phi(a)\cdot b\rangle=(\varphi\circ\phi)(a)(\varphi\circ\phi)(b)-(\varphi\circ\phi)(a)\langle(\varphi\circ\phi)^{**},b\rangle=0.$$
By assumption, there exists a net $\{n_j\}\subset K^{**}$ such that
$$D(a)=\lim_j a\cdot n_j-\phi(a)\cdot n_j\qquad (a\in\mathcal A).$$
Put $m_j=b-n_j$. Then, $\langle m_j,\varphi\circ\phi\rangle=\langle b,\varphi\circ\phi\rangle-\langle n_j,\varphi\circ\phi\rangle=1-0=1$ and
\begin{align*}
a\cdot m_j-\phi(a)\cdot m_j&=ab-a\cdot n_j-\phi(a)\cdot b+\phi(a)\cdot n_j\\
&=ab-\varphi(\phi(a))b-(a\cdot n_j-\varphi(\phi(a))n_j)\\
&\rightarrow 0
\end{align*}
Also, $\alpha\cdot m_j-\varphi(\alpha)m_j\rightarrow 0$ for all $\alpha\in \mathfrak A$. Therefore, $(m_j)$ is a module approximately $(\phi,\varphi)$-mean. The proof in the case of uniform module approximate $(\phi,\varphi)$- amenability and module $w^*$-approximate $(\phi,\varphi)$-amenability are similar.
\end{proof}

The proof idea of the following result is taken from the proof of \cite[Proposition 2.3]{ps}. We include its proof and stress on the details for the
sake of clarity.

\begin{proposition}\label{5th2}
Let $\mathcal A$ be a Banach $\mathfrak A$-module and $\varphi\in\Phi_{\mathfrak A}$ and $\phi\in\Omega_{\mathcal A}$. Then the following are equivalent:

\item[(i)] {There exists a net $\{u_j\}_j\subset\mathcal A^{**}$ such that $u_j(\varphi\circ\phi)\rightarrow 1$ and $a\cdot u_j-(\varphi\circ\phi)(a)u_j \rightarrow 0,\,\alpha\cdot u_j-\varphi(\alpha)u_j\rightarrow 0$ for all $a\in\mathcal A,\alpha\in\mathfrak A$;}
\item[(ii)]{ There exists a net $\{m_k\}_k\subset\mathcal A^{**}$ such that $m_k(\varphi\circ\phi)\rightarrow 1$ and $a\cdot m_k-(\varphi\circ\phi)(a)m_k\rightarrow 0,\,\alpha\cdot m_k-\varphi(\alpha)m_k\rightarrow 0$ for all $a\in\mathcal A,\alpha\in\mathfrak A$,
in the $w^*$-topology of $\mathcal A^{**}$;}

\item[(iii)] {There exists a net $\{n_l\}_l\subset\mathcal A$ such that $(\varphi\circ\phi)(n_l)\rightarrow 1$ and $a\cdot n_l-(\varphi\circ\phi)(a)n_l\rightarrow 0,\,\alpha\cdot n_l-\varphi(\alpha)n_l\rightarrow 0$ for all $a\in\mathcal A,\alpha\in\mathfrak A$.}
\end{proposition}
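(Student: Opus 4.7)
The plan is to dispose of (iii)$\Rightarrow$(i) and (i)$\Rightarrow$(ii) by immediate observations, and to concentrate on the substantive implication (ii)$\Rightarrow$(iii).

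For (iii)$\Rightarrow$(i), the canonical embedding $\mathcal A\hookrightarrow\mathcal A^{**}$ is an isometry intertwining the left $\mathcal A$- and $\mathfrak A$-actions and preserving the value of $\varphi\circ\phi$; so any net in $\mathcal A$ witnessing (iii) yields, via $n_l\mapsto\widehat n_l$, a net in $\mathcal A^{**}$ witnessing (i). The implication (i)$\Rightarrow$(ii) is trivial since norm convergence in $\mathcal A^{**}$ implies $w^*$-convergence.

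For (ii)$\Rightarrow$(iii) I would proceed in three steps. \emph{Step 1 (normalisation).} Since $m_k(\varphi\circ\phi)\to 1$, for $k$ large $m_k(\varphi\circ\phi)\ne 0$, so after passing to a subnet and replacing $m_k$ by $m_k/m_k(\varphi\circ\phi)$ I may assume $m_k(\varphi\circ\phi)=1$ for every $k$; because the rescaling scalars converge to $1$, the $w^*$-convergences $am_k-(\varphi\circ\phi)(a)m_k\to 0$ and $\alpha\cdot m_k-\varphi(\alpha)m_k\to 0$ persist. \emph{Step 2 (Goldstine $+$ diagonal).} By Goldstine's theorem, for each $k$ there is a net $(y_{k,\beta})_\beta\subset\mathcal A$ with $\widehat y_{k,\beta}\to m_k$ in $w^*$. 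Because left multiplication by $a\in\mathcal A$ and the action of $\alpha\in\mathfrak A$ on $\mathcal A^{**}$ are $w^*$-continuous, for every $f\in\mathcal A^*$,
$$\lim_\beta\langle ay_{k,\beta}-(\varphi\circ\phi)(a)y_{k,\beta},f\rangle=\langle am_k-(\varphi\circ\phi)(a)m_k,f\rangle,$$
and analogously for the $\alpha$-action, while $(\varphi\circ\phi)(y_{k,\beta})\to 1$; the right-hand side tends to $0$ as $k\to\infty$. A diagonal argument indexed by finite subsets of $\mathcal A\cup\mathfrak A\cup\mathcal A^*$ and $\varepsilon>0$, followed by the rescaling $n_l:=y_{k,\beta}/(\varphi\circ\phi)(y_{k,\beta})$, produces a net $\{n_l\}\subset\mathcal A$ with $(\varphi\circ\phi)(n_l)=1$ and
$$an_l-(\varphi\circ\phi)(a)n_l\to 0,\qquad \alpha\cdot n_l-\varphi(\alpha)n_l\to 0$$
\emph{weakly} in $\mathcal A$ for every $a\in\mathcal A$ and $\alpha\in\mathfrak A$.

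\emph{Step 3 (Mazur).} I would then repeat the convexity argument from the proof of Theorem \ref{th1}. Put $B:=\{x\in\mathcal A:(\varphi\circ\phi)(x)=1\}$, which is affine and hence convex, and define
$$T:\mathcal A\longrightarrow \mathcal A^{\mathcal A}\oplus\mathcal A^{\mathfrak A},\quad T(x)=\bigl((ax-(\varphi\circ\phi)(a)x)_{a\in\mathcal A},\;(\alpha\cdot x-\varphi(\alpha)x)_{\alpha\in\mathfrak A}\bigr),$$
endowed with the product of norm topologies. Then $T(B)$ is convex, and by Step~2 the origin lies in the closure of $T(B)$ in the product of weak topologies, which coincides with the weak topology on the product space by \cite[Theorem 4.3]{sc}. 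Mazur's theorem therefore gives $\overline{T(B)}^{\,w}=\overline{T(B)}^{\,\|\cdot\|}$, so there is a net $\{p_j\}\subset B$ with $T(p_j)\to 0$ in the product-of-norms topology; this is exactly (iii), with the strengthened conclusion $(\varphi\circ\phi)(p_j)=1$.

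The main obstacle is Step~2: one must arrange the doubly-indexed Goldstine approximations so that a single diagonal subnet simultaneously captures all $w^*$-convergences (over every $a$, $\alpha$ and test functional), and verify that the rescaling to $(\varphi\circ\phi)(n_l)=1$ does not destroy the weak-convergence properties. Once that is in place, Step~1 is a formality and Step~3 is the textbook Mazur argument already used in this paper.
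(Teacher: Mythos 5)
Your proposal is correct and follows essentially the same route as the paper: dismiss (iii)$\Rightarrow$(i) and (i)$\Rightarrow$(ii) as immediate, then prove (ii)$\Rightarrow$(iii) by first using Goldstine's theorem with finite-set/$\varepsilon$ bookkeeping to produce a net in $\mathcal A$ for which the relevant differences tend to $0$ \emph{weakly}, and then invoking Mazur's theorem on a product space (the paper uses the finite products $\mathcal A^{n+m}\times\mathbb C$ indexed by finite subsets, you use the full product as in Theorem \ref{th1}, which is equivalent) to upgrade weak convergence to norm convergence. The "diagonal argument" you flag as the main obstacle is carried out in the paper exactly as you sketch it, by testing against the finite set $\Delta\cup(\Delta\cdot F)\cup\{\varphi\circ\phi\}\cup(\Delta\cdot\mathfrak H)$ and applying the triangle inequality, so there is no genuine gap.
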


\begin{proof} Since the proof of implications (i)$\Rightarrow$(ii) and (iii)$\Rightarrow$(i) are clear, we only prove the implication (ii)$\Rightarrow$(iii). Take $\epsilon>0$ and finite sets $F\subset\mathcal A, \mathfrak H\subset\mathfrak A$ and $\Delta\subset\mathcal A^*$. Then, there exists $k$ such that $|\langle m_k,\varphi\circ\phi\rangle|>1-\epsilon$ and
$$|\langle a\cdot m_k-(\varphi\circ\phi)(a)m_k,f\rangle|\leq\frac{\epsilon}{3},\hspace{.3cm}|\langle\alpha\cdot m_k-\varphi(\alpha)m_k,f\rangle|<\frac{\epsilon}{3} \hspace{1cm}(a\in F,\,\alpha\in \mathfrak H,\,f\in\Delta)$$
By Goldstine's theorem, there exists $a_k\in\mathcal A$ such that
$$|\langle f,a_k\rangle-\langle m_k,f\rangle|<\frac{\epsilon}{3M}\hspace{1cm},f\in\Delta\cup(\Delta\cdot F)\cup\{\varphi\circ\phi\}\cup(\Delta\cdot\mathfrak H)$$
where $M=\sup\{|(\varphi\circ\phi)(a)|,|\varphi(\alpha)|;a\in F,\,\alpha\in \mathfrak H\}$. Thus, for any $f\in\Delta$ and $a\in F$, we have
\begin{align*}
|\langle f,a\cdot a_k-\varphi\circ\phi(a)a_k\rangle|&\leq|\langle a\cdot a_k-a\cdot m_k,f\rangle|+|\langle a\cdot m_k-\varphi\circ\phi(a)m_k,f\rangle|\\
&+|\langle \varphi\circ\phi(a)m_k-\varphi\circ\phi(a)a_k,f\rangle|\\
&\leq|\langle f\cdot a,a_k\rangle-\langle m_k,f\cdot a\rangle|+|\langle a\cdot m_k,f\rangle-\varphi\circ\phi(a)\langle m_k,f\rangle|\\
&+|\varphi\circ\phi(a)\langle m_k,f\rangle-\varphi\circ\phi(a)\langle f,a_k\rangle|\\
&\leq\frac{\epsilon}{3M}+\frac{\epsilon}{3}+|\varphi\circ\phi(a)|\frac{\epsilon}{3M}\\
&<\frac{\epsilon}{3}+\frac{\epsilon}{3}+\frac{\epsilon}{3}=\epsilon.
\end{align*}
Similarly, one can show that  $|\langle f,\alpha\cdot a_k-\varphi(\alpha)a_k\rangle|<\epsilon$ for all $f\in\Delta$ and $\alpha\in \mathfrak H$. So, there exists a net $\{b_l\}_l\subset\mathcal A$ such that for each $a\in\mathcal A$, $\varphi\circ\phi(b_l)\rightarrow 1$ and $$a\cdot b_l-\varphi\circ\phi(a)b_l\rightarrow 0,\hspace{.3cm}\alpha\cdot b_l-\varphi(\alpha)b_l\rightarrow 0$$
weakly in $\mathcal A$. Lastly, for each finite sets $F=\{a_1,a_2,...,a_n\},\mathfrak H=\{\alpha_1,...,\alpha_m\}$ of $\mathcal A$ and $\mathfrak A$, respectively, consider the set
$$C=\{((a_ib_l-\varphi\circ\phi(a_i)b_l)_{i=1}^n,(\alpha_j\cdot b_l-\varphi(\alpha_j)b_l)_{j=1}^m,\varphi\circ\phi(b_l)); b\in\mathcal A\}.$$
Hence, in the Banach space $\mathcal A^{n+m}\times\mathbb{C}$, $(\overbrace{0,0,...,0}^{n+m-times},1)$ is in the weak cluster of the convex hull of $C$. Using Mazur's theorem,  for each $\epsilon>0$, there exists $V_{\epsilon, F, \mathfrak H}\in\text{co}\{b_l\}$ such that $\|aV_{\epsilon, F, \mathfrak H}-\varphi\circ\phi(a)V_{\epsilon, F, \mathfrak H}\|<\epsilon$, $\|\alpha\cdot V_{\epsilon, F, \mathfrak H}-\phi(\alpha)V_{\epsilon, F, \mathfrak H}\|<\epsilon$ and $|\varphi\circ\phi(V_{\epsilon, F, \mathfrak H})-1|<\epsilon$ for all $a\in F, \alpha\in \mathfrak H$. Therefore, there exists a net $\{n_l\}_l\subseteq \mathcal A$ such that $(\varphi\circ\phi)(n_l)\rightarrow 1$ and for all $a\in\mathcal A,\alpha\in\mathfrak A$
$an_l-\varphi\circ\phi(a)n_l\rightarrow 0, \alpha\cdot n_l-\varphi(\alpha)n_l\rightarrow 0$. This completes the proof.
\end{proof}


\begin{proposition}
Let $\mathcal A$ and $\mathcal B$ be $\mathfrak A$-modules, and $\theta:\mathcal A\longrightarrow\mathcal B$ be continues $\mathfrak A$-module epimorphism. Then module approximately $(\phi\circ\theta,\varphi)$-amenability of $\mathcal A$ implies module approximately $(\phi,\varphi)$-amenability of $\mathcal B$.
\end{proposition}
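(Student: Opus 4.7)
The plan is to argue directly from Definition 2.6. Fix any $((\phi,\varphi),\mathcal B$-$\mathfrak A)$-bimodule $X$ and any bounded module derivation $D\colon\mathcal B\longrightarrow X^*$; the task is to exhibit $D$ as approximately inner. The guiding idea is to pull back the $\mathcal B$-module structure of $X$ along $\theta$ and thereby transfer the problem to $\mathcal A$, where the hypothesis applies.

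First I turn $X$ into a Banach $\mathcal A$-module by setting
\[ a\ast x := \theta(a)\cdot x,\qquad x\ast a := x\cdot\theta(a)\qquad (a\in\mathcal A,\, x\in X),\]
while retaining the original $\mathfrak A$-actions. Because $\theta$ is simultaneously a (continuous) Banach algebra morphism and an $\mathfrak A$-module morphism, this new structure satisfies the compatibility condition (\ref{e1}); moreover $a\ast x=\phi(\theta(a))\cdot x=(\phi\circ\theta)(a)\cdot x$, so $X$ becomes a $((\phi\circ\theta,\varphi),\mathcal A$-$\mathfrak A)$-bimodule. In particular, $\phi\circ\theta\in\Omega_{\mathcal A}$ since $(\phi\circ\theta)(a\cdot\alpha)=\phi(\theta(a)\cdot\alpha)=\varphi(\alpha)(\phi\circ\theta)(a)$. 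Dually, the induced $\mathcal A$-actions on $X^*$ coincide with the original $\mathcal B$-actions precomposed with $\theta$.

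Next consider $\widetilde D := D\circ\theta\colon\mathcal A\longrightarrow X^*$. Because $\theta$ is multiplicative, the Leibniz rule for $D$ yields
\[\widetilde D(aa')=\theta(a)\cdot D(\theta(a'))+D(\theta(a))\cdot\theta(a')=a\ast\widetilde D(a')+\widetilde D(a)\ast a',\]
and because $\theta$ is $\mathfrak A$-linear, $\widetilde D$ inherits the $\mathfrak A$-module-homomorphism property from $D$. So $\widetilde D$ is a bounded module derivation into the dual of a $((\phi\circ\theta,\varphi),\mathcal A$-$\mathfrak A)$-bimodule. By the assumed module approximate $(\phi\circ\theta,\varphi)$-amenability of $\mathcal A$, there exists a net $\{f_j\}\subset X^*$ with
\[\widetilde D(a)=\lim_j\bigl(a\ast f_j-f_j\ast a\bigr)=\lim_j\bigl(\theta(a)\cdot f_j-f_j\cdot\theta(a)\bigr)\qquad(a\in\mathcal A).\]

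Finally, for any $b\in\mathcal B$ I use surjectivity of $\theta$ to pick $a\in\mathcal A$ with $b=\theta(a)$; then $D(b)=\widetilde D(a)=\lim_j(b\cdot f_j-f_j\cdot b)$, so $D$ is approximately inner and $\mathcal B$ is module approximately $(\phi,\varphi)$-amenable. The only step requiring real care is the first one, namely verifying that the pulled-back $\mathcal A$-actions on $X$ are well-defined, satisfy (\ref{e1}), and fit the $(\phi\circ\theta,\varphi)$-format; but this is essentially a mechanical unwinding of definitions once one observes that $\theta$ intertwines the two $\mathfrak A$-module structures. Everything else in the argument is a transparent transfer between $\mathcal A$ and $\mathcal B$.
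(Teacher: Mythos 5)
Your proof is correct, but it follows a genuinely different route from the paper's. The paper argues at the level of approximate means: it invokes the equivalence of module approximate $(\phi,\varphi)$-amenability with the existence of a module approximate $(\phi,\varphi)$-mean (Theorem \ref{5th1}), takes an approximate $(\phi\circ\theta,\varphi)$-mean $\{m_j\}\subset\mathcal A^{**}$, pushes it forward to $n_j:=\theta^{**}(m_j)\in\mathcal B^{**}$ via $n_j(g)=m_j(g\circ\theta)$, and checks that $\{n_j\}$ is an approximate $(\phi,\varphi)$-mean for $\mathcal B$ using the intertwining identities $(g\cdot\alpha)\circ\theta=(g\circ\theta)\cdot\alpha$ and $(g\cdot\theta(a))\circ\theta=(g\circ\theta)\cdot a$. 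You instead work directly with the definition: you pull the $((\phi,\varphi),\mathcal B$-$\mathfrak A)$-bimodule structure back along $\theta$, precompose the derivation with $\theta$, and use surjectivity to descend the approximating net. The trade-offs are real: the paper's argument is shorter once Theorem \ref{5th1} is available, but that theorem is stated under the extra hypothesis that the left $\mathfrak A$-action on the algebra is $\alpha\cdot a=\varphi(\alpha)a$, so the paper's proof tacitly inherits that restriction (for both $\mathcal A$ and $\mathcal B$); your derivation-level argument bypasses the mean characterization entirely and therefore needs no such side condition. Do make sure to record explicitly (as you indicate at the end) that the pulled-back actions satisfy the compatibility conditions (\ref{e1}) and that $\widetilde D=D\circ\theta$ is additive, bounded and an $\mathfrak A$-bimodule map — all of which follow mechanically from $\theta$ being a continuous multiplicative $\mathfrak A$-module map — and note that the single net $\{f_j\}$ works for every $b\in\mathcal B$ because the expression $\theta(a)\cdot f_j-f_j\cdot\theta(a)$ depends only on $b=\theta(a)$.
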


\begin{proof}
By hypothesis, there exists a module approximate $(\phi\circ\theta,\varphi)$-mean $\{m_j\}\subseteq\mathcal A^{**}$ such that $m_j(\varphi\circ\phi\circ\theta)=1$ and $a\cdot m_j-(\phi\circ\theta)(a)\cdot m_j\rightarrow 0,\,\alpha\cdot m_j-\varphi(\alpha)m_j\rightarrow 0$ for all $a\in\mathcal A,\alpha\in\mathfrak A$. For any $j$, define $n_j\in\mathcal B^{**}$ via $n_j(g)=m_j(g\circ\theta)$ where $g\in\mathcal B^*$. Then, $n_j(\varphi\circ\phi)=m_j(\varphi\circ\phi\circ\theta)=1$. Similar to the proof of \cite[Proposition 2.3]{bod}, we can show that $(g\cdot\alpha)\circ\theta=(g\circ\theta)\cdot\alpha$ and $(g\cdot\theta(a))\circ\theta=(g\circ\theta)\cdot a$ for all $a\in\mathcal A,\alpha\in\mathfrak A$ and $g\in\mathcal B^*$. So we have
\begin{align*}
\langle n_j,g\cdot\alpha\rangle-\varphi(\alpha)\langle n_j,g\rangle &=\langle m_j,(g\cdot\alpha)\circ\theta\rangle-\varphi(\alpha)\langle m_j,g\circ\theta\rangle\\
&=\langle m_j,(g\circ\theta)\cdot\alpha\rangle-\varphi(\alpha)\langle m_j,g\circ\theta\rangle\\
&=\langle\alpha\cdot m_j,g\circ\theta\rangle-\varphi(\alpha)\langle,g\circ\theta\rangle\\
&=\langle \alpha\cdot m_j-\varphi(\alpha)m_j,g\circ\theta\rangle\\
&\rightarrow 0
\end{align*}
for all $\alpha\in\mathfrak A$ and $g\in\mathcal B^*$. Hence, $\alpha\cdot m_j-\varphi(\alpha)m_j\rightarrow0$for all $\alpha\in\mathfrak A$ and $g\in\mathcal B^*$. On the other hand,
\begin{align*}
\langle \theta(a)\cdot n_j-(\varphi\circ\phi)(\theta(a))n_j,g\rangle&=\langle n_j,g\cdot\theta(a)\rangle-(\varphi\circ\phi)(\theta(a))\langle n_j,g\rangle\\
&=\langle m_j,(g\cdot\theta(a))\circ\theta\rangle-(\varphi\circ\phi)(\theta(a))\langle m_j,g\circ\theta\rangle\\
&=\langle m_j,(g\circ\theta)\cdot a\rangle-(\varphi\circ\phi)(\theta(a))\langle m_j,g\circ\theta\rangle\\
&=\langle a\cdot m_j,g\circ\theta\rangle-(\varphi\circ\phi\circ\theta)(a)\langle m_j,g\circ\theta\rangle\\
&=\langle a\cdot m_j-(\varphi\circ\phi\circ\theta)(a)m_j,g\circ\theta\rangle\\
&\rightarrow 0
\end{align*}
 for all $a\in\mathcal A$ and $g\in\mathcal B^*$. Therefore, $(n_j)$ is module approximate $(\varphi\circ\phi)$-mean for $\mathcal B$.
\end{proof}
\begin{corollary}\label{coo}
Let $\mathcal A$ and $\mathcal B$ be Banach $\mathfrak A$-modules and $\theta:\mathcal A\longrightarrow\mathcal B$ be a continues $\mathfrak A$-bimodule epimorphism. Then the module approximately character amenability of $\mathcal A$ implies the module approximately character amenability of $\mathcal B$. In particular, if $\mathcal A$ is approximately character module amenable, then so is $\mathcal A/J$.
\end{corollary}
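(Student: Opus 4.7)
The plan is to deduce this corollary directly from the preceding proposition, which already handles the single-character case; module character amenability is just the quantified version over all admissible pairs $(\phi,\varphi)$, so the work has essentially been done. I would proceed as follows.

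First, I would fix arbitrary $\phi \in \Omega_{\mathcal B}$ and $\varphi \in \Phi_{\mathfrak A}$ and set $\psi := \phi \circ \theta : \mathcal A \to \mathfrak A$. The main small verification is that $\psi \in \Omega_{\mathcal A}$. This is routine: since $\theta$ is an algebra homomorphism, $\psi(ab) = \phi(\theta(a)\theta(b)) = \phi(\theta(a))\phi(\theta(b)) = \psi(a)\psi(b)$; and since $\theta$ is an $\mathfrak A$-bimodule map, $\psi(a\cdot\alpha) = \phi(\theta(a)\cdot\alpha) = \varphi(\alpha)\phi(\theta(a)) = \varphi(\alpha)\psi(a)$, and analogously for $\psi(\alpha\cdot a)$. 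Thus $\psi\in\Omega_{\mathcal A}$ as required.

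Next, by the assumption that $\mathcal A$ is module approximately character amenable, $\mathcal A$ is module approximate $(\psi,\varphi)$-amenable, i.e.\ module approximate $(\phi\circ\theta,\varphi)$-amenable. Applying the preceding proposition with this particular $\psi$ transfers this property to $\mathcal B$, yielding module approximate $(\phi,\varphi)$-amenability of $\mathcal B$. Since $\phi\in\Omega_{\mathcal B}$ and $\varphi\in\Phi_{\mathfrak A}$ were arbitrary, $\mathcal B$ is module approximately character amenable.

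For the ``in particular'' statement, I would take $\mathcal B = \mathcal A/J$ and $\theta$ the canonical quotient map. Since $J$ was observed earlier (in the discussion preceding the section on module character amenability) to be both an $\mathcal A$-submodule and an $\mathfrak A$-submodule of $\mathcal A$, the quotient $\mathcal A/J$ is a Banach $\mathfrak A$-module in a natural way, and $\theta$ is a continuous $\mathfrak A$-bimodule algebra epimorphism, so the first assertion applies. There is no substantive obstacle here; the only mild point requiring care is the verification that $\phi\circ\theta \in \Omega_{\mathcal A}$, which relies on $\theta$ being simultaneously an algebra map and an $\mathfrak A$-bimodule map (both of which are built into the hypothesis ``$\mathfrak A$-bimodule epimorphism'' in the Banach algebra context used throughout the paper).
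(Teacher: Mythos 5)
Your argument is correct and is exactly the deduction the paper intends (the corollary is stated without proof as an immediate consequence of the preceding proposition): fix $\phi\in\Omega_{\mathcal B}$ and $\varphi\in\Phi_{\mathfrak A}$, check that $\phi\circ\theta\in\Omega_{\mathcal A}$ using that $\theta$ is both multiplicative and an $\mathfrak A$-bimodule map, apply the proposition, and specialize to the quotient map $\mathcal A\to\mathcal A/J$, which is admissible since $J$ is an $\mathcal A$- and $\mathfrak A$-submodule. No gaps.
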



Module approximate character amenability of $\ell^1(S)$ and its second dual  for
an inverse semigroup $S$ is characterized in the following result.

\begin{theorem}\label{tm} Let $S$ be an inverse semigroup with the set of idempotents $E$. Then

\begin{enumerate}
\item[(i)] {$\ell^1(S)$ is $\ell^1(E)$-module approximately
character amenable if and only if $S$ is amenable;}
\item[(ii)] {$\ell
^{1}(S)^{**}$ is $\ell^1(E)$-module approximately character amenable if and only if
$G_S$ is finite.}
\end{enumerate}
\end{theorem}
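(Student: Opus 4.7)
My strategy is to reduce each equivalence to a classical (approximate) character-amenability statement for $\ell^1(G_S)$ or $\ell^1(G_S)^{**}$, by quotienting out the ideal $J$ and exploiting the fact that the induced $\ell^1(E)$-action on $\ell^1(S)/J \cong \ell^1(G_S)$ collapses, with $\ell^1(E)$ acting essentially through the augmentation character.

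For (i), the implication ``$\Leftarrow$'' is the easy direction: if $S$ is amenable, Amini's theorem \cite{am1} gives module amenability of $\ell^1(S)$, which is strictly stronger than module approximate character amenability. For ``$\Rightarrow$'', Corollary \ref{coo} transfers module approximate character amenability from $\ell^1(S)$ to $\ell^1(S)/J$. Under the isomorphism $\ell^1(S)/J \cong \ell^1(G_S)$, the identity $\delta_{[s]}\cdot\delta_e=\delta_{[s]}$ forces any $\phi\in\Omega_{\ell^1(G_S)}$ to be paired with $\varphi$ equal to the augmentation character on $\ell^1(E)$; then the $\ell^1(E)$-action on every $((\phi,\varphi),\ell^1(G_S)\text{-}\ell^1(E))$-bimodule is trivial, so module $(\phi,\varphi)$-amenability reduces to classical $\phi$-amenability of $\ell^1(G_S)$. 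Hence $\ell^1(G_S)$ is (classically) approximately character amenable, and by the standard characterization (see \cite{kan2}) $G_S$ is amenable, which is equivalent to $S$ being amenable by \cite{dun,mn}.

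For (ii), the ``$\Leftarrow$'' direction follows because Theorem \ref{thj} already gives module character amenability of $\ell^1(S)^{**}$ when $G_S$ is finite, and this trivially implies module approximate character amenability. For ``$\Rightarrow$'', I would mirror the proof of Theorem \ref{thj}: apply Corollary \ref{coo} with $\mathcal A=\ell^1(S)^{**}$ and $\mathfrak A=\ell^1(E)$, together with the identification $J=\mathcal N$ from \cite[Remark 2.5]{bod}, to obtain approximate character amenability of $\ell^1(S)^{**}/\mathcal N$. Then the continuous $\ell^1(E)$-bimodule epimorphism $\lambda:\ell^1(S)^{**}/\mathcal N\to\ell^1(S)^{**}/J^{\perp\perp}$ yields approximate character amenability of $\ell^1(S)^{**}/J^{\perp\perp}\cong\ell^1(G_S)^{**}$. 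Finally, the approximate analogue of \cite[Theorem 11.17]{dal}, namely that $\ell^1(G)^{**}$ is approximately character amenable only when $G$ is finite, gives $G_S$ finite.

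The main obstacle is the technical verification that module $(\phi,\varphi)$-amenability on the quotient $\ell^1(S)/J$ really does coincide with classical $\phi$-amenability of $\ell^1(G_S)$: this requires carefully unpacking how the $((\phi,\varphi),\mathcal A\text{-}\mathfrak A)$-bimodule structure degenerates once $\ell^1(E)$ acts via the augmentation character. A parallel technicality arises in (ii) in transferring approximate character amenability through the maps $\ell^1(S)^{**}\to\ell^1(S)^{**}/\mathcal N\to\ell^1(S)^{**}/J^{\perp\perp}\cong\ell^1(G_S)^{**}$, and one may need to prove directly that $\ell^1(G_S)^{**}$ being approximately character amenable forces $G_S$ finite (for instance via Theorem \ref{5th1} and Proposition \ref{5th2} combined with standard group-algebra techniques such as the topological centre argument).
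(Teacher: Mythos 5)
Your proposal is correct and follows essentially the same route as the paper: both directions of (i) and (ii) are handled by reducing, via the quotients $\ell^1(S)/J\cong\ell^1(G_S)$ and $\ell^1(S)^{**}/\mathcal N\twoheadrightarrow\ell^1(S)^{**}/J^{\perp\perp}\cong\ell^1(G_S)^{**}$, to classical statements about group algebras, with the "technical verification" you flag being exactly what the paper delegates to \cite[Remark 2.5]{bod}. The only adjustment needed is in the classical input: since approximate character amenability is weaker than character amenability, the correct references are \cite[Theorem 7.1]{ps} (for $\ell^1(G_S)$ approximately character amenable $\Rightarrow G_S$ amenable) and \cite[Theorem 7.4]{ps} (for $\ell^1(G_S)^{**}$ approximately character amenable $\Rightarrow G_S$ finite), rather than \cite{kan2} or an unproved "approximate analogue" of \cite[Theorem 11.17]{dal} — these are precisely the results the paper invokes.
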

\begin{proof} (i)
Let $\ell^1(S)$ be module approximately charater amenable. It follows from \cite[Remark 2.5]{bod} that $\ell^1(S)/J\cong\ell^1(G_S)$ is  approximately
character amenable. Since $G_S$ is a discrete group,
by \cite[Theorem 7.1]{ps}, it is amenable and so $S$ is amenable.

Conversely, if $S$ is amenable, then by \cite[Theorem 3.1]{am1} (or \cite[Theorem 2.9]{pou}) we
conclude that $\ell^1(S)$ is module amenable and hence module
approximately character amenable.

(ii)  If $G_S$ is finite, then $\ell ^1(S)^{**}$ module amenable by \cite[Theorem 3.4]{abe} (see also \cite[Theorem 2.11]{pou}). Thus $\ell ^1(S)^{**}$ is module approximately character amenable.

Conversely, let $\mathcal N$ be the closed ideal of ${\mathcal
A^{**}}$ and $\lambda$ be the map before Theorem \ref{thj}. By \cite[Remark 2.5]{bod}, $\ell ^1(S)^{**}/\mathcal N$ is approximately character amenable. Since $\lambda$ is a continuous epimorphism, by Corollary \ref{coo}, $\ell ^{1}(S)^{**}/\mathcal J^{\perp \perp}\cong \ell^1(G_S)^{**}$ is approximately character amenable. Now, by applying Theorem 7.4 from \cite{ps}, we see that $G_S$ is finite.
\end{proof}


Let $\mathcal A$ be a Banach algebra and a Banach $\mathfrak A$-bimodule with compatible actions. Suppose $\phi\in\Omega_{\mathcal A}$ and $\widetilde{\varphi}$ is the extention of $\varphi$ on $\mathfrak A^\sharp=\mathfrak A\oplus\mathbb{C}$ defined by $\widetilde{\varphi}(\alpha,\lambda)=\varphi(\alpha)+\lambda$, for all $\alpha\in\mathfrak A,\lambda\in\mathbb{C}$. If $u=(\alpha,\lambda)\in\mathfrak A^\sharp$, it is easy to check that $$\phi(a\cdot u)=\phi(u\cdot a)=\widetilde{\varphi}(u)\phi(a)\hspace{.5cm}(a\in\mathcal A).$$
Define $\widetilde{\phi}:\mathcal A^\sharp=\mathcal A\oplus\mathfrak A^\sharp\longrightarrow\mathfrak A^\sharp$ by $\widetilde{\phi}(a,u)=(\phi(a),\widetilde{\varphi}(u))$ ($\mathcal A^\sharp$ is the module unitization of $\mathcal A$). We can eaily verify that $\widetilde{\phi}$ is multiplicative and
$$\widetilde{\phi}(u\cdot (a,v))=\widetilde{\phi}((a,v)\cdot u)=\widetilde{\varphi}(u)\widetilde{\phi}(a,v).$$
Therefore $\widetilde{\phi}$ is an extension of $\phi$ such that $\widetilde{\phi}(0,u)=\widetilde{\varphi}(u)$ is the extension $h_0=\tilde{0}$ of the zero function (for more details see \cite{bod}).

\begin{proposition}\label{5pro1}
Let $\mathcal A$ be a Banach $\mathfrak A$-module Banach without identity and let $\varphi\in\Phi_{\mathfrak A}$ and $\phi\in\Omega_{\mathcal A}$, then $\mathcal A$ is approximately module $(\phi,\varphi)$-amenable if and only if $\mathcal A^\sharp$
is approximately module $(\widetilde{\phi},\widetilde{\varphi})$-amenable.
\end{proposition}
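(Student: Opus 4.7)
My plan is to prove both directions by transferring bounded module derivations between $\mathcal A$ and $\mathcal A^\sharp$ via restriction to, and extension from, the $\mathcal A$-summand. The key structural facts I will use are that $\widetilde{\phi}$ restricts to $\phi$ on $\mathcal A$ and $\widetilde{\varphi}$ restricts to $\varphi$ on $\mathfrak A$; that any $((\widetilde{\phi},\widetilde{\varphi}),\mathcal A^\sharp\text{-}\mathfrak A^\sharp)$-bimodule restricts canonically to a $((\phi,\varphi),\mathcal A\text{-}\mathfrak A)$-bimodule; and that, conversely, any $((\phi,\varphi),\mathcal A\text{-}\mathfrak A)$-bimodule $X$ extends by letting elements of the form $(0,u)$ act as the scalar $\widetilde{\varphi}(u)$.

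For the forward direction, I would take a $((\widetilde{\phi},\widetilde{\varphi}),\mathcal A^\sharp\text{-}\mathfrak A^\sharp)$-bimodule $Y$ and a bounded module derivation $\widetilde{D}:\mathcal A^\sharp\to Y^*$. Writing $e$ for the unit of $\mathcal A^\sharp$, one has $\widetilde{\varphi}(\widetilde{\phi}(e))=1$, so $e$ acts as the identity on $Y$ from both sides, and $\widetilde{D}(e)=\widetilde{D}(e^{2})=2\widetilde{D}(e)$ forces $\widetilde{D}(e)=0$. Combining this with the $\mathfrak A^\sharp$-module homomorphism property of $\widetilde{D}$ gives $\widetilde{D}(0,u)=\widetilde{D}(u\cdot e)=\widetilde{\varphi}(u)\widetilde{D}(e)=0$ for all $u\in\mathfrak A^\sharp$, whence $\widetilde{D}(a,u)=D(a)$ with $D:=\widetilde{D}|_{\mathcal A}$ a bounded module derivation into the restricted $Y^*$. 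By hypothesis $D$ is approximately inner via some net $\{g_j\}\subset Y^*$, and since $(0,u)\cdot g_j-g_j\cdot(0,u)=\widetilde{\varphi}(u)g_j-\widetilde{\varphi}(u)g_j=0$, the same net approximately implements $\widetilde{D}$.

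For the converse, let $X$ be a $((\phi,\varphi),\mathcal A\text{-}\mathfrak A)$-bimodule and $D:\mathcal A\to X^*$ a bounded module derivation. I would extend $X$ to a $((\widetilde{\phi},\widetilde{\varphi}),\mathcal A^\sharp\text{-}\mathfrak A^\sharp)$-bimodule via the scalar prescription above, and set $\widetilde{D}:\mathcal A^\sharp\to X^*$ by $\widetilde{D}(a,u):=D(a)$. The derivation identity for $\widetilde{D}$ is verified by expanding $(a_1,u_1)(a_2,u_2)=(a_1a_2+u_1\cdot a_2+a_1\cdot u_2,u_1u_2)$ and using $D(u\cdot a)=D(a\cdot u)=\widetilde{\varphi}(u)D(a)$, which follows from the $\mathfrak A$-module homomorphism property of $D$ together with $\mathbb C$-linearity; both sides of the identity then collapse to the same expression in terms of $D$. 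By hypothesis $\widetilde{D}$ is approximately inner, and restriction to the $\mathcal A$-summand of $\mathcal A^\sharp$ immediately yields approximate innerness of $D$.

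The main obstacle, or rather the step requiring the most careful bookkeeping, is the derivation verification in the backward direction. It hinges on the identities $\widetilde{\phi}(u\cdot(a,v))=\widetilde{\phi}((a,v)\cdot u)=\widetilde{\varphi}(u)\widetilde{\phi}(a,v)$ recorded just before the statement, on the $\mathbb C$-linearity of $D$, and on checking that the proposed extension of the $\mathcal A$-$\mathfrak A$-module structure on $X$ to $\mathcal A^\sharp$-$\mathfrak A^\sharp$ satisfies the full set of compatibility axioms. Once these algebraic bookkeeping points are in place, the passage between approximately inner nets in the two settings is routine.
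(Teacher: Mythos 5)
Your strategy --- transferring derivations directly between $\mathcal A$ and $\mathcal A^\sharp$ --- is genuinely different from the paper's, which never touches derivations here: it works with the approximate-mean characterization (Theorem \ref{5th1} and Proposition \ref{5th2}), building an approximate $(\widetilde{\phi},\widetilde{\varphi})$-mean $m_j=e-b_j$ from a right approximate identity in the case $\varphi=0$ and otherwise adapting the mean-based argument of \cite[Proposition 2.7]{bod}. Your backward direction is essentially sound (modulo the usual bookkeeping you flag). The forward direction, however, has a genuine gap. In a $((\widetilde{\phi},\widetilde{\varphi}),\mathcal A^\sharp$-$\mathfrak A^\sharp)$-bimodule $Y$ only the \emph{left} action of $\mathcal A^\sharp$ is the scalar action $b\cdot y=\widetilde{\varphi}(\widetilde{\phi}(b))y$; the right action is arbitrary. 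So $e$ does \emph{not} act as the identity on $Y$ from both sides, and on $Y^*$ the left action of $e$ (dual to the right action on $Y$) need not be the identity. Two of your steps break on this point. First, $\widetilde{D}(e)=\widetilde{D}(e^2)=\widetilde{D}(e)\cdot e+e\cdot\widetilde{D}(e)$ yields only $e\cdot\widetilde{D}(e)=0$ (because $\widetilde{D}(e)\cdot e=\widetilde{\varphi}(\widetilde{\phi}(e))\widetilde{D}(e)=\widetilde{D}(e)$), not $\widetilde{D}(e)=0$. Second, and more seriously, your final step needs $(0,u)\cdot g_j=\widetilde{\varphi}(u)g_j$, whereas in fact $(0,u)\cdot g_j=\widetilde{\varphi}(u)\,(e\cdot g_j)$; the net $\{g_j\}$ only witnesses approximate innerness over $\mathcal A$, so you have no control over $e\cdot g_j-g_j$.

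Both defects are repairable, and the repairs are really the content of this direction. From $\widetilde{D}(b)=\widetilde{D}(be)=\widetilde{D}(b)+b\cdot\widetilde{D}(e)$ one gets $b\cdot\widetilde{D}(e)=0$ for all $b\in\mathcal A^\sharp$, so $b\mapsto\widetilde{\varphi}(\widetilde{\phi}(b))\widetilde{D}(e)$ is exactly the inner derivation $D_{-\widetilde{D}(e)}$; subtracting it, one may assume $\widetilde{D}(e)=0$, whence $\widetilde{D}(0,u)=\widetilde{\varphi}(u)\widetilde{D}(e)=0$ as you intended. Then one must replace $g_j$ by $h_j:=e\cdot g_j$: using $ae=ea=a$ and $e^2=e$ one checks $a\cdot h_j=a\cdot g_j$ and $e\cdot h_j=h_j$, so that $a\cdot h_j-h_j\cdot a=e\cdot\bigl(a\cdot g_j-\widetilde{\varphi}(\widetilde{\phi}(a))g_j\bigr)\to e\cdot D(a)=D(a)$ and $(0,u)\cdot h_j-h_j\cdot(0,u)=\widetilde{\varphi}(u)(e\cdot h_j-h_j)=0$. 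With these corrections your argument goes through and is more self-contained than the paper's (which outsources the two cases to \cite{ps} and \cite{bod}); as written, though, the forward direction does not close.
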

\begin{proof}
 If $\varphi=0$, then $\varphi\circ\phi=0$ and $\mathcal A^\sharp=\mathbb{C}e\oplus\mathcal A$ where $e =(0,1)$ is
the identity of $\mathfrak A^\#$. So $(\widetilde{\varphi}\circ\widetilde{\phi})(\mathcal A)=0$ and $\widetilde{\varphi}\circ\widetilde{\phi}(e)=1$. Now, similar to the proof of \cite[Proposition 2.8]{ps}, we can prove that $\mathcal A$ has a right approximate identity, say $\{b_j\}$. Put $m_j=e-b_j$. Then $(\widetilde{\varphi}\circ\widetilde{\phi})(m_\alpha)=(\widetilde{\varphi}\circ\widetilde{\phi})(e)-(\widetilde{\varphi}\circ\widetilde{\phi})(b_\alpha)=1$ and
$$\|am_j-(\widetilde{\varphi}\circ\widetilde{\phi})(a)m_j\|=\|am_j\|=\|ae-ab_j\|\rightarrow 0\hspace{1cm}(a\in\mathcal A^\sharp).$$
Also, for each $u\in\mathcal A^\sharp$, we have
$$\|u\cdot m_j-\widetilde{\varphi}(u)m_\alpha\|\rightarrow 0.$$
By Propositions \ref{5th1} and \ref{5th2}, $\mathcal A^\sharp$ is approximately module $(\widetilde{\phi},\widetilde{\varphi})$-amenable.
If $\varphi\in\Phi_{\mathfrak A}$, the process of the proof of \cite[Proposition 2.7]{bod} can be repeated to get the result by substituting the net of module means instead of the module mean.
\end{proof}


\begin{proposition}
Let $\mathcal A$ be a Banach $\mathfrak A$-module and let $\varphi\in\Phi_{\mathfrak A}$ and $\phi\in\Omega_{\mathcal A}$, then $\mathcal A$ is module approximately $(\phi,\varphi)$-amenable if and only if $\mathcal A^{**}$ is module approximately $(\phi^{**},\varphi^{**})$-amenable.
\end{proposition}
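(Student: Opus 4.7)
The plan is to combine the mean-based characterization of Theorem \ref{5th1} with the reformulations of Proposition \ref{5th2}, and then transfer approximate means between $\mathcal A^{**}$ and $\mathcal A^{****}$ via the canonical embedding, mirroring the strategy used earlier in the paper for the non-approximate analogue.

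For the reverse direction, suppose $\mathcal A^{**}$ is module approximately $(\phi^{**},\varphi^{**})$-amenable, so by Theorem \ref{5th1} there is an approximate $(\phi^{**},\varphi^{**})$-mean $\{M_k\}\subset\mathcal A^{****}$. I would restrict each $M_k$ to $\mathcal A^{*}\subset\mathcal A^{***}$ to obtain $m_k\in\mathcal A^{**}$. A short computation using the identity $\hat f\cdot\hat a=\widehat{f\cdot a}$ in $\mathcal A^{***}$ and $(\varphi^{**}\circ\phi^{**})(\hat a)=(\varphi\circ\phi)(a)$ yields
\[
\langle a\cdot m_k-(\varphi\circ\phi)(a) m_k,\, f\rangle
=\langle \hat a\cdot M_k-(\varphi^{**}\circ\phi^{**})(\hat a)M_k,\, \hat f\rangle
\]
for $a\in\mathcal A$, $f\in\mathcal A^{*}$. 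Because $\mathcal A^{*}\hookrightarrow\mathcal A^{***}$ is isometric, taking $\sup_{\|f\|\leq 1}$ gives $\|a\cdot m_k-(\varphi\circ\phi)(a)m_k\|_{\mathcal A^{**}}\leq\|\hat a\cdot M_k-(\varphi^{**}\circ\phi^{**})(\hat a)M_k\|_{\mathcal A^{****}}\to 0$, and an analogous bound handles the $\mathfrak A$-condition. Thus $\{m_k\}$ is an approximate $(\phi,\varphi)$-mean for $\mathcal A$, and module approximate $(\phi,\varphi)$-amenability of $\mathcal A$ follows from Theorem \ref{5th1}.

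For the forward direction, starting from an approximate mean $\{m_j\}\subset\mathcal A^{**}$ for $\mathcal A$, I would view the canonical images $\widehat{m_j}\in\mathcal A^{****}$. Paralleling the Arens-product computation in the proof of the non-approximate analogue given earlier in this paper, for fixed $F\in\mathcal A^{**}$ and $\Psi\in\mathcal A^{***}$ with bounded nets $\widehat{a_k}\stackrel{w^*}{\to}F$ in $\mathcal A^{**}$ and $\widehat{f_l}\stackrel{w^*}{\to}\Psi$ in $\mathcal A^{***}$, one arrives at
\[
\langle F\cdot\widehat{m_j}-(\varphi^{**}\circ\phi^{**})(F)\widehat{m_j},\,\Psi\rangle
=\lim_l\lim_k\langle a_k\cdot m_j-(\varphi\circ\phi)(a_k) m_j,\, f_l\rangle,
\]
and a similar identity holds for the $\mathfrak A^{**}$-part. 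Exploiting the approximate mean property of $\{m_j\}$ I would argue that these quantities tend to $0$ as $j\to\infty$ for each fixed $F,\Psi$, producing a $w^{*}$-approximate $(\phi^{**},\varphi^{**})$-mean for $\mathcal A^{**}$, i.e.\ condition (ii) of Proposition \ref{5th2} applied to the Banach $\mathfrak A^{**}$-module $\mathcal A^{**}$. The implication (ii)$\Rightarrow$(iii) of that proposition then upgrades this to a norm-approximate mean in $\mathcal A^{**}$, and Theorem \ref{5th1} applied to $\mathcal A^{**}$ finally yields module approximate $(\phi^{**},\varphi^{**})$-amenability.

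I expect the technical heart of the argument to lie in the forward step: the non-approximate proof exploits the \emph{exact} equality $\langle m,f\cdot a\rangle=(\varphi\circ\phi)(a)\langle m,f\rangle$, whereas here one only has $\langle a\cdot m_j-(\varphi\circ\phi)(a)m_j,f\rangle\to 0$ pointwise in $a\in\mathcal A$, without any uniformity in $a$. Carefully interchanging the outer limit in $j$ with the inner iterated weak*-Arens limits in $k$ and $l$, and then passing from the resulting weak*-convergence to norm convergence through Proposition \ref{5th2}, is the main obstacle.
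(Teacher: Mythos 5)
Your overall route coincides with the paper's: restriction of means from $\mathcal A^{****}$ to $\mathcal A^{*}$ for the reverse implication, and the canonical embedding of an approximate mean into $\mathcal A^{****}$ together with iterated Arens limits for the forward one. The reverse direction as you present it is sound whenever $\varphi\circ\phi\neq 0$; note only that the paper additionally disposes of the degenerate case $\varphi=0$ by producing a right approximate identity, a case your mean-based argument cannot reach since no approximate $(\phi,\varphi)$-mean exists when $\varphi\circ\phi=0$.

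The forward direction, however, contains a genuine gap, and it is precisely the one you flag at the end without resolving. For fixed $F$ and $\Psi$ you must show that
\[
\lim_l\lim_k\bigl\langle a_k\cdot m_j-(\varphi\circ\phi)(a_k)\,m_j,\ f_l\bigr\rangle
\]
tends to $0$ as $j\to\infty$, and the only available estimate is $\limsup_k\|a_k\cdot m_j-(\varphi\circ\phi)(a_k)m_j\|\cdot\sup_l\|f_l\|$. An approximate $(\phi,\varphi)$-mean gives $\|a\cdot m_j-(\varphi\circ\phi)(a)m_j\|\to 0$ only pointwise in $a$, with no uniformity over the bounded net $(a_k)$, so this $\limsup$ need not vanish. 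Equivalently, the operators $T_j\colon a\mapsto a\cdot m_j-(\varphi\circ\phi)(a)m_j$ converge to $0$ strongly but not in norm, and $\langle T_j^{**}F,\Psi\rangle$ need not converge to $0$ once $F$ lies outside $\widehat{\mathcal A}$. Retreating to a $w^*$-approximate mean and then invoking Proposition \ref{5th2} does not help, because the obstruction already sits in establishing the $w^*$-convergence against general $F$; the interchange does go through for the \emph{uniform} variant, where $\|T_j\|\to 0$ and hence $\|T_j^{**}\|\to 0$. You should be aware that the paper's own proof silently performs the same illegitimate step: inside the iterated limits it replaces $\langle m_l,f_k\cdot a_j\rangle$ by $(\varphi\circ\phi)(a_j)\langle m_l,f_k\rangle$, an identity valid only for an exact mean. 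So your proposal reproduces the published argument, gap included; closing it requires either the uniformity hypothesis or a genuinely different device.
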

\begin{proof}
Assume that $\mathcal A^{**}$ is module approximately $(\phi^{**},\varphi^{**})$-amenable and $\varphi=0$. Since $\mathcal A^{**}$ has a right approximate identity (see the proof of Proposition \ref{5pro1}), $\mathcal A$ has also a right approximate identity, and thus $\mathcal A$ is approximate module $(\phi,\varphi)$-amenable. If $\varphi\neq0$, let $\{m_\alpha\}\subset\mathcal A^{****}$ be an approximate module $(\phi^{**},\varphi^{**})$-mean for $\mathcal A^{**}$, then by Theorem \ref{5th1}, $\{m_\alpha|_{\mathcal A^{*}}\}$ forms an approximate module $(\phi,\varphi)$-mean for $\mathcal A$ and thus $\mathcal A$ is approximate module $(\phi,\varphi)$-amenable.

Conversely, let $\mathcal A$ be approximate module  $(\phi,\varphi)$-amenable and $\{m_l\}$ be a approximate module $(\phi,\varphi)$-mean in $\mathcal A^{**}$. For each $F\in\mathcal A^{**}$ and $\psi\in\mathcal A^{***}$, take bounded nets $(a_j)\in\mathcal A$ and $(f_k)\in\mathcal A^*$ with $\hat{a}_j^{**}\rightarrow F$ and $\hat{f}_k^{**}\rightarrow\psi$ in the $w^*$-topology. For any $l$, we consider $m_l$ as an element $m^{**}$ of $\mathcal A^{***}$. Hence $m_l^{**}(\varphi^{**}\circ\phi^{**})=m_l(\varphi\circ\phi)=1$ and
\begin{align*}
\lim_l\langle F\cdot m_l^{**}-\varphi^{**}\circ\phi^{**}(F)m_l^{**},\psi\rangle&=\lim_l\langle F\cdot m_l^{**},\psi\rangle-\lim_l\langle (\varphi^{**}\circ\phi^{**})(F)m_l^{**},\psi\rangle\\
&=\lim_l\langle m_l^{**},\psi\cdot F\rangle-\varphi^{**}\circ\phi^{**}(F)\lim_l\langle m_l^{**},\psi\rangle\\
&=\lim_l\langle \psi,F\square m_l\rangle-\varphi^{**}\circ\phi^{**}(F)\lim_l\langle m_l^{**},\psi\rangle\\
&=\lim_l\lim_k\langle F\square m_l,f_k\rangle-\varphi^{**}\circ\phi^{**}(F)\lim_l\langle m_l^{**},\psi\rangle\\
&=\lim_l\lim_k\langle F,m_l\cdot f_k\rangle-\varphi^{**}\circ\phi^{**}(F)\lim_l\langle m_l^{**},\psi\rangle\\
&=\lim_l\lim_k\lim_j\langle m_l\cdot f_k,a_j\rangle-\varphi^{**}\circ\phi^{**}(F)\lim_l\langle m_l^{**},\psi\rangle\\
&=\lim_l\lim_k\lim_j\langle m_l,f_k\cdot a_j\rangle-\varphi^{**}\circ\phi^{**}(F)\lim_l\langle m_l^{**},\psi\rangle\\
&=\lim_i\lim_k\lim_j\varphi\circ\phi(a_j)\langle m_l,f_k\rangle-\varphi^{**}\circ\phi^{**}(F)\lim_l\langle m_l^{**},\psi\rangle\\
&=\lim_l\lim_j\varphi\circ\phi(a_j)\lim_k\langle m_l,f_k-\varphi^{**}\circ\phi^{**}(F)\lim_l\langle m_l^{**},\psi\rangle\\
&=\lim_l\varphi^{**}\circ\phi^{**}(F)\langle m_l^{**},\psi\rangle-\lim_l\varphi^{**}\circ\phi^{**}(F)\langle m_l^{**},\psi\rangle\\
&=0
\end{align*}
Now, for any $\mathfrak F\in\mathfrak A$ and $\psi\in\mathcal A^{***}$, similar to the above computation, we can show $\lim_j\langle\mathfrak F\cdot m_l^{**}-\varphi^{**}(\mathfrak F)m_l^{**},\psi\rangle=0$. This means that $\mathcal A^{**}$ is module approximately  $(\phi^{**},\varphi^{**})$-amenable.
\end{proof}
\begin{corollary}
Let $\mathcal A$ be a Banach $\mathfrak A$-module. If $\mathcal A^{**}$ is module approximately character amenable, then so is $\mathcal A$.
\end{corollary}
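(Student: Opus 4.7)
The plan is to deduce this corollary as a direct consequence of the preceding proposition, which states the equivalence of module approximate $(\phi,\varphi)$-amenability for $\mathcal{A}$ and module approximate $(\phi^{**},\varphi^{**})$-amenability for $\mathcal{A}^{**}$. So the only real work is to unpack the definition of module approximate character amenability and check that the hypothesis on $\mathcal{A}^{**}$ supplies the correct family of modular characters on the level of the second duals.

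First I would fix arbitrary $\varphi\in\Phi_{\mathfrak A}$ and $\phi\in\Omega_{\mathcal A}$. As was observed earlier in the paper (just before the proposition preceding Theorem~\ref{th1}), the double conjugates satisfy $\phi^{**}\in\Omega_{\mathcal A^{**}}$ and $\varphi^{**}\in\Phi_{\mathfrak A^{**}}$, so these are legitimate data with respect to which module approximate $(\phi^{**},\varphi^{**})$-amenability of $\mathcal A^{**}$ is meaningful. Since $\mathcal A^{**}$ is assumed module approximately character amenable (in both the left and right senses), in particular it is module approximately $(\phi^{**},\varphi^{**})$-amenable. Applying the ``if'' direction of the preceding proposition then yields that $\mathcal A$ is module approximately $(\phi,\varphi)$-amenable.

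Because $\varphi\in\Phi_{\mathfrak A}$ and $\phi\in\Omega_{\mathcal A}$ were arbitrary, $\mathcal A$ is right module approximately character amenable in the sense of Definition~2.5(ii). For the left-handed version one invokes Remark~2.6: replacing $\mathcal A$ by its opposite algebra $\mathcal A^{\mathrm{op}}$ turns the right-sided statement into the left-sided one, and the same reasoning applies verbatim since $(\mathcal A^{\mathrm{op}})^{**}=(\mathcal A^{**})^{\mathrm{op}}$ and the operation of taking double conjugates commutes with passing to the opposite algebra. Combining the two sides gives module approximate character amenability of $\mathcal A$, as required.

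There is no real obstacle here; the only subtlety worth spelling out is verifying that ``module approximate character amenability'' of $\mathcal A^{**}$ genuinely supplies the needed hypothesis for \emph{every} $(\phi,\varphi)$ coming from $\mathcal A$ and $\mathfrak A$, which is handled by the standard observation that $\phi^{**}$ and $\varphi^{**}$ belong to the appropriate $\Omega$ and $\Phi$ sets. Everything else is a direct citation of the preceding proposition together with the opposite-algebra remark.
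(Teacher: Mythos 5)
Your proposal is correct and follows exactly the route the paper intends: the corollary is stated without proof as an immediate consequence of the preceding proposition, and your argument --- fix arbitrary $\varphi\in\Phi_{\mathfrak A}$ and $\phi\in\Omega_{\mathcal A}$, note that $\phi^{**}\in\Omega_{\mathcal A^{**}}$ and $\varphi^{**}\in\Phi_{\mathfrak A^{**}}$ so the hypothesis on $\mathcal A^{**}$ applies, and invoke the ``if'' direction of the proposition --- is precisely the intended reasoning. Your extra care about the left-handed version via $\mathcal A^{\mathrm{op}}$ is a reasonable bit of bookkeeping that the paper leaves implicit.
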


Let $\mathcal A$ and $\mathcal B$ be Banach algebras and $\mathcal
A\widehat{\otimes}\mathcal B$ be the projective tensor product of
$\mathcal A$ and $\mathcal B$. Then
$\mathcal A\widehat{\otimes}\mathcal B$ is a Banach $\mathfrak
A\widehat{\otimes}\mathfrak A$-module with the following actions:
$$(\alpha\otimes\beta)\cdot(a\otimes b)=(\alpha\cdot a)\otimes (\beta\cdot
b)\quad(a\in\mathcal A, b\in \mathcal B, \alpha,\beta\in\mathfrak
A),$$ and similarly for the right action. For
$\varphi\in\Omega_{\mathcal A}$ and $\psi\in\Omega_{\mathcal B}$,
consider $\varphi\otimes\psi$ by $(\varphi\otimes\psi)(a\otimes
b)=\varphi(a)\psi(b)$ for all $a\in \mathcal A$ and $b\in \mathcal
B$. Clearly, $\varphi\otimes\psi\in\Omega_{\mathcal
A\widehat{\otimes}\mathcal B}$. Also if $\varphi_1,
\varphi_2\in\Phi_{\mathfrak A} \cup\{0\}$, then
$\varphi_1\otimes\varphi_2\in\Phi_{\mathfrak
A\widehat{\otimes}\mathfrak A} \cup\{0\}$, and if $\bar{\varphi}\in\Phi_{\mathfrak
A\widehat{\otimes}\mathfrak A}$, then $\bar{\varphi}=\varphi_1\otimes\varphi_2$ where $\varphi_1,
\varphi_2\in\Phi_{\mathfrak A} $.

\begin{proposition}
Let $\mathcal A$ and $\mathcal B$ be Banach $\mathfrak A$-modules, and let $\phi\in\Omega_{\mathcal A},\psi\in\Omega_{\mathcal B}$ and $\varphi_1,\varphi_2\in\Phi_{\mathfrak A}$.

\item[(i)] If $\mathcal A\widehat{\otimes}\mathcal B$ is module approximately $(\phi\otimes\psi,\varphi_1\otimes\varphi_2)$-amenable (as
$\mathfrak A\widehat{\otimes}\mathfrak A$-module), then $\mathcal A$ is module approximately $(\phi,\varphi_1)$-amenable and $\mathcal B$ is module approximately $(\psi,\varphi_2)$-amenable.
\item[(ii)] If $\mathcal A$ is module $(\phi,\varphi_1)$-amenable and $\mathcal B$ is module approximately $(\psi,\varphi_2)$-amenable, then $\mathcal A\widehat{\otimes}\mathcal B$ is module approximately $(\phi\otimes\psi,\varphi_1\otimes\varphi_2)$-amenable.
\end{proposition}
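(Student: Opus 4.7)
For \emph{(i)}, the strategy is to transfer an approximate module mean from $\mathcal{A}\widehat\otimes\mathcal{B}$ down to each factor via slice maps. By Proposition~\ref{5th2}, the hypothesis yields a net $(w_l)\subseteq\mathcal{A}\widehat\otimes\mathcal{B}$ with $\Lambda(w_l)\to 1$, $c\,w_l-\Lambda(c)\,w_l\to 0$ for every $c\in\mathcal{A}\widehat\otimes\mathcal{B}$, and $u\cdot w_l-(\varphi_1\otimes\varphi_2)(u)\,w_l\to 0$ for every $u\in\mathfrak{A}\widehat\otimes\mathfrak{A}$, where $\Lambda=(\varphi_1\otimes\varphi_2)\circ(\phi\otimes\psi)$. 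Because $\Lambda(w_l)\to 1$, each of $\varphi_1\circ\phi$, $\varphi_2\circ\psi$, $\varphi_1$, $\varphi_2$ is nonzero, so one can fix $b_0\in\mathcal{B}$ and $\beta_0\in\mathfrak{A}$ with $\varphi_2\circ\psi(b_0)=\varphi_2(\beta_0)=1$. Define the bounded linear slice $r\colon\mathcal{A}\widehat\otimes\mathcal{B}\to\mathcal{A}$ by $r(a\otimes b)=(\varphi_2\circ\psi)(b)\,a$. A routine check on simple tensors (extended by linearity and continuity) gives the identities $r((a\otimes b_0)\,c)=a\cdot r(c)$, $r((\alpha\otimes\beta_0)\cdot c)=\alpha\cdot r(c)$, and $\varphi_1\circ\phi\circ r=\Lambda$, together with $\Lambda(a\otimes b_0)=\varphi_1\circ\phi(a)$ and $(\varphi_1\otimes\varphi_2)(\alpha\otimes\beta_0)=\varphi_1(\alpha)$. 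Applying $r$ to the three defining convergences for $(w_l)$ then makes $u_l:=r(w_l)$ a module approximate $(\phi,\varphi_1)$-mean for $\mathcal{A}$, so Proposition~\ref{5th2} concludes the approximate $(\phi,\varphi_1)$-amenability of $\mathcal{A}$. The symmetric slice $s(a\otimes b)=(\varphi_1\circ\phi)(a)\,b$ handles $\mathcal{B}$.

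For \emph{(ii)}, the strategy is to tensor the two nets. By Theorem~\ref{th1}, module $(\phi,\varphi_1)$-amenability of $\mathcal{A}$ provides a \emph{bounded} net $(a_j)\subseteq\mathcal{A}$ with $\varphi_1\circ\phi(a_j)=1$, $\|aa_j-\varphi_1\circ\phi(a)\,a_j\|\to 0$, and $\|\alpha\cdot a_j-\varphi_1(\alpha)\,a_j\|\to 0$, while Proposition~\ref{5th2} gives a (possibly unbounded) net $(b_k)\subseteq\mathcal{B}$ with $\varphi_2\circ\psi(b_k)\to 1$ and the analogous approximate module-mean properties. Set $w_{j,k}=a_j\otimes b_k$. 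On a simple tensor $c=a\otimes b$ the key decomposition
\begin{equation*}
c\,w_{j,k}-\Lambda(c)\,w_{j,k}=\big(aa_j-\varphi_1\circ\phi(a)\,a_j\big)\otimes bb_k+\varphi_1\circ\phi(a)\,a_j\otimes\big(bb_k-\varphi_2\circ\psi(b)\,b_k\big),
\end{equation*}
together with the analogous identity for $u=\alpha\otimes\beta\in\mathfrak{A}\widehat\otimes\mathfrak{A}$, shows that for fixed $k$ the first summand vanishes as $j\to\infty$ (because $\|bb_k\|$ is fixed), and then the second vanishes as $k\to\infty$ (crucially because $\|a_j\|\leq M$ uniformly). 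Indexing by finite subsets $F\subseteq\mathcal{A}\widehat\otimes\mathcal{B}$, $F'\subseteq\mathfrak{A}\widehat\otimes\mathfrak{A}$ and $\varepsilon>0$ extracts a single subnet that is a module approximate $(\phi\otimes\psi,\varphi_1\otimes\varphi_2)$-mean, and Proposition~\ref{5th2} finishes the proof.

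The main technical obstacle will be extending the simple-tensor estimate in part \emph{(ii)} to an arbitrary $c=\sum_n a_n\otimes b_n$ in the projective tensor product. The inner limit $j\to\infty$ (with $k$ fixed) is routine: the $n$-th summand of the first term is bounded by $(\|a_n\|+|\varphi_1\circ\phi(a_n)|)M\|b_n\|\|b_k\|$, a summable dominant that allows dominated convergence in $n$. The outer limit $k\to\infty$ of the second summand is subtler, because its natural bound is proportional to $\|b_k\|$, which may diverge since $(b_k)$ is only an approximate—not a bounded approximate—mean. The fix is to truncate $c$ to a finite partial sum $c_N$ with tail $\|c-c_N\|<\varepsilon$, handle $c_N$ term by term with ordinary iterated limits, and use the boundedness of $(a_j)$ to absorb the tail uniformly in $(j,k)$; this controlled truncation, which is absent from the cleaner part \emph{(i)}, is the only step requiring genuine care.
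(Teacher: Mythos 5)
Part (i) of your proposal is correct and is essentially the paper's own argument: your slice map $r(a\otimes b)=(\varphi_2\circ\psi)(b)\,a$ is precisely the predual of the paper's construction $\overline{m}_j(f)=m_j\bigl(f\otimes(\varphi_2\circ\psi)\bigr)$, so the two proofs differ only in whether the slicing is performed inside $\mathcal A\widehat{\otimes}\mathcal B$ (after using Proposition~\ref{5th2} to bring the approximate mean down into the algebra) or at the bidual level. For part (ii) the paper gives no proof (it refers to an earlier article), so your tensoring-of-nets argument is a genuine addition; its overall scheme --- the two-term decomposition, the bounded net $(a_j)$ from Theorem~\ref{th1}, the inner limit in $j$ handled by dominated convergence over $n$ --- is sound.

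The one genuine gap is in your proposed treatment of a general $c=\sum_n a_n\otimes b_n$ in part (ii), namely the step ``truncate to $c_N$ and use the boundedness of $(a_j)$ to absorb the tail uniformly in $(j,k)$.'' The tail is estimated by $\|c-c_N\|\,\|w_{j,k}\|\,(1+\|\Lambda\|)\leq \varepsilon\, M\,\|b_k\|\,(1+\|\Lambda\|)$, and since $(b_k)$ is only an approximate mean it need not be bounded; the bound $\|a_j\|\leq M$ controls the dependence on $j$ but does nothing about the factor $\|b_k\|$, so the tail cannot be absorbed uniformly in $k$. The repair is to dispense with truncation: in the second summand of your decomposition the left tensor factor $a_j$ does not depend on $n$, so summing over $n$ collapses it to the single elementary tensor $a_j\otimes\bigl(b'b_k-(\varphi_2\circ\psi)(b')\,b_k\bigr)$, where $b'=\sum_n(\varphi_1\circ\phi)(a_n)\,b_n$ converges absolutely in $\mathcal B$ because $|(\varphi_1\circ\phi)(a_n)|\leq\|\varphi_1\circ\phi\|\,\|a_n\|$ and $\sum_n\|a_n\|\|b_n\|<\infty$. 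Its norm is at most $M\,\|b'b_k-(\varphi_2\circ\psi)(b')\,b_k\|$, which tends to $0$ uniformly in $j$; the iterated limit (first $k$, then $j$, indexed by finite sets and $\varepsilon$) then closes with no tail left to control. With this replacement your argument for (ii) is complete.
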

\begin{proof}
Assume that $(m_j)\subset(\mathcal A\widehat{\otimes}\mathcal B)^{**}$ is a module approximate $(\phi\otimes\psi,\varphi_1\otimes\varphi_2)$-mean for $\mathcal A\widehat{\otimes}\mathcal B$. Then, $m_j((\varphi_1\otimes\varphi_2)\circ(\phi\otimes\psi))=1$ for all $\alpha$, Also,
$$\|(a\otimes b)\cdot m_j-m_j((\varphi_1\otimes\varphi_2)\circ(\phi\otimes\psi))(a\otimes b)m_j\|\rightarrow 0,\hspace{.2cm}\|(\alpha_1\otimes\alpha_2)\cdot m_j-(\varphi_1\otimes\varphi_2)(\alpha_1\otimes\alpha_2)m_j\rightarrow 0$$
for all $a\in\mathcal A,b\in\mathcal B,\alpha_1,\alpha_2\in\mathfrak A,f\in\mathcal A^*$. We choose $a_0\in\mathcal A$ and $b_0\in\mathcal B$ such that $(\varphi_1\circ\phi)(a_0)=(\varphi_2\circ\psi)(b_0)=1$. Define $\{\overline{m}_j\}\subseteq\mathcal A^{**}$ by $\overline{m}_j(f)=m(f\otimes(\varphi_2\circ\psi))\,(f\in\mathcal A^*)$. For each $a\in\mathcal A$ and $f\in\mathcal A^*$, we have
\begin{align*}
\lim_j\langle a\cdot\overline{m}_j-(\varphi_1\circ\phi)(a)\overline{m}_j,f\rangle&=((\varphi_1\circ\phi)\otimes(\varphi_2\circ\psi))(a_0\otimes b_0)\lim_j\langle m_j,(f\cdot a)\otimes(\varphi_2\circ\psi)\rangle\\
&=\lim_j\langle m_j,(f\cdot a)\otimes(\varphi_2\circ\psi)\cdot(a_0\otimes b_0)\rangle\\
&=\lim_j\langle m_j,f\cdot(aa_0)\otimes(\varphi_2\circ\psi)\cdot b_0\rangle\\
&=\lim_j\langle m_j,f\otimes(\varphi_2\circ\psi)\cdot(aa_0\otimes b_0\rangle)\\
&=((\varphi_1\circ\phi)\otimes(\varphi_2\circ\psi))(aa_0\otimes b_0)\lim_j\langle m_j,f\otimes(\varphi_2\circ\psi)\\
&=(\varphi_1\circ\phi)(a)(\varphi_1\circ\phi)(a_0)(\varphi_2\circ\psi)(b_0)\lim_j\langle m_j,f\otimes(\varphi_2\circ\psi)\rangle\\
&=(\varphi_1\circ\phi)(a)\lim_j\overline{m}_j(f)
\end{align*}
We now take $\gamma_1,\gamma_2\in\mathfrak A$ such that $\varphi_1(\gamma_1)=\varphi_2(\gamma_2)=1$. For each $\alpha\in\mathfrak A$ and $f\in\mathcal A^*$, we have
\begin{align*}
\lim_j\langle\alpha\cdot\overline{m}_j-\varphi_1(\alpha)\overline{m}_j,f\rangle&=\lim_j\langle \overline{m}_j,f\cdot\alpha\rangle-\varphi_1(\alpha)\lim_j\langle\overline{m}_j,f\rangle\\
&=\lim_j(\varphi_1\otimes\varphi_2)(\gamma_1\otimes\gamma_2)\langle m_j,(f\cdot\alpha)\otimes(\varphi_2\circ\psi)\rangle-\varphi_1(\alpha)\lim_j\langle m_j,f\otimes(\varphi\circ\psi)\rangle\\
&=\lim_j\langle m_j,((f\cdot\alpha)\otimes(\varphi_2\circ\psi))\cdot(\gamma_1\otimes \gamma_2)\rangle-\varphi_1(\alpha)\lim_j\langle m_j,f\otimes(\varphi\circ\psi)\rangle\\
&=\lim_j\langle m_j,(f\cdot(\alpha\gamma_1))\otimes((\varphi_2\circ\psi)\cdot\gamma_2)\rangle-\varphi_1(\alpha)\lim_\alpha\langle m_j,f\otimes(\varphi\circ\psi)\rangle\\
&=\lim_j\langle m_j,(f\otimes(\varphi_2\circ\psi))\cdot(\alpha\gamma_1\otimes\gamma_2)\rangle-\varphi_1(\alpha)\lim_j\langle m_j,f\otimes(\varphi\circ\psi)\rangle\\
&=(\varphi_1\otimes\varphi_2)(\alpha\gamma_1\otimes\gamma_2)\lim_j\langle m_j,f\otimes(\varphi_2\circ\psi)\rangle-\varphi_1(\alpha)\lim_j\langle m_j,f\otimes(\varphi\circ\psi)\rangle\\
&=\varphi_1(\alpha)\varphi_1(\gamma_1)\varphi_2(\gamma_2)\lim_j\langle m_j,f\otimes(\varphi_2\circ\psi)\rangle-\varphi_1(\alpha)\lim_j\langle m_j,f\otimes(\varphi\circ\psi)\rangle\\
&=\varphi_1(\alpha)\lim_j\langle m_j,f\otimes(\varphi_2\circ\psi)\rangle-\varphi_1(\alpha)\lim_j\langle m_j,f\otimes(\varphi\circ\psi)\rangle\\
&=0.
\end{align*}
Thus, we get $\|\alpha\cdot\overline{m}_j-\varphi_(\alpha)\overline{m}_j\|\rightarrow0$. Also, for any $j$, we have $\overline{m}_j(\varphi_1\circ\phi)=m_j((\varphi_1\otimes\varphi_2)\circ(\phi\otimes\psi))=1$. Therefore, $\mathcal A$ is module approximately $(\phi,\varphi_1)$-amenable. Similarly $\mathcal B$ is module approximately $(\psi,\varphi_2)$-amenable. The proof of part (ii) is  similar to the proof of necessary implication of \cite[Theorem 2.8]{bod} and so we omit it.
\end{proof}

\section*{Acknowledgments}
The authors express their sincere thanks to the referee for his/her careful and detailed reading of the manuscript and very
helpful suggestions. The second and third authors would like to thank the University of Isfahan for its financial support.

\end{document}